\theoremstyle{plain}
\title[Recurrence and non-recurrence sets for products of rotations]{Some new examples of recurrence and non-recurrence sets for products of rotations on the unit circle}
\author{Sophie Grivaux}
\address{CNRS,
Laboratoire Paul Painlev\' e, UMR 8524, Universit\'e  Lille 1, Cit\' e Scientifique, 59655 Villeneuve d'Ascq
Cedex, France}
\email{grivaux@math.univ-lille1.fr}
\author{Maria Roginskaya}
\address{Department of Mathematical Sciences,
Chalmers University of Technology, SE-41296 G\"oteborg, Sweden, \emph{and}
Department of Mathematical Sciences,
G\"oteborg University, SE-41296 G\"oteborg, Sweden}
\email{maria@chalmers.se}
\subjclass{37B20, 37A45, 37B05}
\keywords{Recurrence and non-recurrence for dynamical systems, rotations of the unit circle, syndetic sets, Bohr
topology on $\mathbb{Z}$, Bohr and $r$-Bohr sets}
\thanks{The first author was partially supported
by ANR-Projet Blanc DYNOP}
\def\T{\ensuremath{\mathbb T}}
\def\R{\ensuremath{\mathbb R}}
\def\Z{\ensuremath{\mathbb Z}}
\def\Q{\ensuremath{\mathbb Q}}
\def\N{\ensuremath{\mathbb N}}
\newcommand{\wrt}{with respect to}
\newcommand{\mpt}{measure-preserving transformation}
\newtheorem{theorem}{Theorem}[section]
\newtheorem{lemma}[theorem]{Lemma}
\newtheorem{proposition}[theorem]{Proposition}
\newtheorem{corollary}[theorem]{Corollary}
\theoremstyle{definition}}
\theoremstyle{definition}}
\theoremstyle{definition}}
\theoremstyle{definition}}
\theoremstyle{definition}\newtheorem{remark}[theorem]{Remark}}
\newtheorem{question}[theorem]{Question}
\theoremstyle{definition}\newtheorem*{FFC Criterion}{Frequent
Faber-hypercyclicity Criterion}}
\newtheorem*{Hypercyclicity Criterion}{Hypercyclicity Criterion}
{\theoremstyle{definition}\newtheorem*{GS Criterion}{Godefroy-Shapiro
Criterion}}
\def\piednote#1{\let\oldfn=\thefootnote\def\thefootnote{}\footnote{\noindent#1}%
\addtocounter{footnote}{-1}\def\thefootnote{\oldfn}}
\begin{document}

\begin{abstract}
We study recurrence and non-recurrence sets for dynamical systems on compact spaces, in particular for products of rotations on the unit circle $\T$. A set of integers is called $r$\emph{-Bohr} if it is recurrent for all products of $r$ rotations on $\T$, and \emph{Bohr} if it is recurrent for all products of rotations on $\T$. It is a result due to Katznelson that for each $r\ge 1$ there exist sets of integers which are $r$-Bohr but not $(r+1)$-Bohr. We present new examples of $r$-Bohr sets which are not Bohr, thanks to a construction which is both flexible and completely explicit. Our results are related to an old combinatorial problem of Veech concerning syndetic sets and the Bohr topology on $\Z$, and its reformulation in terms of recurrence sets which is due to Glasner and Weiss. 
\end{abstract}
\maketitle

\section{Introduction}

The general topic of this paper is the study of recurrence and non-recurrence sets for dynamical systems. In the topological setting, recurrence sets are defined as follows: a dynamical system is a pair $(X,d,f)$, where $(X,d)$ is a compact metric space for the distance $d$ and $f$ is a continuous map of $(X,d)$ into itself. If $(n_k)_{k\ge 0}$ is a strictly increasing sequence, we say that $\{n_k\}$  is  a \emph{recurrence set} (or a \emph{Birkhoff set}) if for any  dynamical system $(X,f)$ and any $\epsilon>0$ there exists a point $x\in X$ and a $k\geq 0$ such that $d(f^{n_k}(x),x)< \epsilon$, where $f^{n}={f\circ\ldots\circ f}$ ($n$ times) denotes the $n^{th}$ iterate of $f$. In the measure theoretic
setting recurrence sets, which are often called \emph{Poincar\'e sets} in this context, are defined in this way: $\{n_k\}$ is a recurrence set if for any probability space $(X,\mathcal{B},m)$ and any measure-preserving transformation $T$ of $X$, there exists for any set $A\in\mathcal{B}$ with $m(A)>0$ and a $k\geq 0$ such that $m(T^{n_k}A\cap A)>0$.

\par\smallskip

It is not difficult to see that any Poincar\'e set is a Birkhoff set: indeed if $\{n_k\}$ is not a Birkhoff set, let $(X,d,f)$ be a dynamical system such that for some $\epsilon >0$, $d(f^{n_k}(x),x)\geq \epsilon$ for any $k\geq 0$ and any $x\in X$. Without loss of generality, $(X,d,f)$ can be supposed to be a minimal system, and hence there exists a probability measure $m$ on $X$ whose support is $X$ and which is invariant by $f$. 
There exists then a non-empty open set $U$ in $X$ such that $f^{n_k}(U)\cap U=\varnothing$ for any $k\geq 0$. As $m(U)>0$, it follows that $\{n_k\}$ is not a Poincar\'e set. The converse assertion is not true: there are Birkhoff sets which are not Poincar\'e sets \cite{Kr}, see also \cite{Weiss}.

\par\smallskip

Recurrence is a central topic in the study of dynamical systems, and we refer the reader to one of the classical books \cite{Wa} or \cite{Pe} for the basic facts, and to the works \cite{Fu2}, \cite{Fu}, \cite{G} or \cite{GW} for a deeper study of various recurrence properties, as well as their applications to number theory and combinatorics.

\par\smallskip

In the rest of the paper, we say that $\{n_k\}$ is \emph{a recurrence set for the dynamical system} $(X,d,f)$ if for all $\epsilon >0$ there exists $k\geq 0$ and $x\in X$ such that $d(f^{n_k}(x),x)<\epsilon$, and that it is a \emph{recurrence set in the ergodic sense} for $(X,m,T)$ if for any $A\in \mathcal{B}$ with $m(A)>0$ there exists a $k\geq 0$ such that $m(T^{n_k}A\cap A)>0$. 

\par\smallskip

Standard examples of recurrence sets (besides the obvious example of the set $\{k\}$) are the set of squares $\{k^2\}$, 
or more generally the sets of the form $\{p(k)\}$ where $p$ is a polynomial taking integer values on integers with $p(0)=0$,
difference sets $D-D$ where $D$ is any infinite set in $\mathbb{N}$, thick sets (i.e. sets containing arbitrarily long intervals), the sets $\mathcal{P}-1$ and $\mathcal{P}+1$, where $\mathcal{P}$ denotes
 the set of primes, or more generally the so called van der Corput (vdC) sets. See, for instance \cite[p. 109]{Queff} or \cite{BL} for more information on vdC sets.
Some generalized polynomials also yield recurrence sets, see \cite{BH}.

\par\smallskip

The starting point of this paper is  an old problem in combinatorial number theory which is to know whether any difference set $S-S$, where $S$ is a subset of $\mathbb{Z}$ with bounded gaps, must contain a Bohr neighborhood of zero. It is known by a result of Veech \cite{Ve} that this is true up to a set of density zero, but it is not known whether this set can be dispensed with.  It is shown by
Glasner in \cite{Gl2} and Boshernitzan and Glasner in \cite{BoGl} (see also the papers \cite{GW} by Glasner and Weiss and \cite{Weiss} by Weiss) that this problem is equivalent to the following question concerning recurrence sets:

%
%

\begin{question} \cite{Gl2}, \cite{GW}, \cite{BoGl}, \cite{Weiss} \label{q2}
If $\{n_k\}$ is a recurrence set for all finite products of circle rotations, is it a recurrence set? 
\end{question}
 
Question \ref{q2} was studied in several papers, for instance in \cite{Gl2}, \cite{Weiss}, \cite{BoGl}, \cite{Pest} and \cite{K} (where an equivalent formulation in terms of Cayley numbers of graphs is given). Sets which are recurrent for all finite products of circle rotations are called \emph{Bohr sets}. If $r$ is a positive integer, a set which is recurrent for all products of $r$ rotations on $\T^{r}$ is called \emph{$r$-Bohr}. In view of Question \ref{q2}, it comes as a natural problem to ask whether an $r$-Bohr set is necessarily a Bohr set. It was shown by Katznelson in \cite{K} that it is not the case. More precisely, the following result was proved in \cite{K}:

\begin{theorem}\label{thkatz}\cite{K}
 Let $r\ge 1$. For each $(r+1)$-tuple $(\lambda _{1},\ldots,\lambda _{r+1})$ of elements of $\T$, $\lambda _{j}=e^{2i\pi\theta _{j}}$, $\theta _{j}\in[0,1)$ with $(\theta _{1},\ldots, \theta  _{r+1})$ $\Q$-independent, and for each $\delta \in (0,1)$, the set
 $$D^{\delta }_{\lambda _{1},\ldots, \lambda _{r+1}}=\{n\ge 0 \textrm{ ; }\min_{j=1,\ldots, r+1}|\lambda _{j}^{n}+1|<\delta \}$$ is an $r$-Bohr set which is not $(r+1)$-Bohr.
\end{theorem}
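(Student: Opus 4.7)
My approach splits into the two directions of the theorem. The failure of $(r+1)$-Bohr-ness is immediate: the product rotation $T = (\lambda_1,\ldots,\lambda_{r+1})$ on $\T^{r+1}$ already fails to recur along $D^\delta_{\lambda_1,\ldots,\lambda_{r+1}}$. Indeed, for any $n$ in this set, picking an index $k$ with $|\lambda_k^n + 1| < \delta$ and using $\delta < 1$, the reverse triangle inequality gives $|\lambda_k^n - 1| \ge 2 - \delta > 1$, so with the maximum-over-coordinates metric on $\T^{r+1}$ one obtains $d(T^n x, x) \ge 2 - \delta$ for every $x$, ruling out recurrence at any scale below $1$.

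The content is in the $r$-Bohr direction, which I would reduce to a dimension count via Pontryagin duality on tori. Fix a product rotation $\mu_j = e^{2\pi i\alpha_j}$, $j = 1,\ldots,r$, on $\T^r$ together with $\eps > 0$. Unwinding the recurrence requirement, I need some $n \in \Z$ (and in fact arbitrarily large such $n$) for which $\|n\alpha_j\|_\T$ is uniformly small for all $j$ and $\|n\theta_k - 1/2\|_\T$ is small for at least one $k$. Introducing the orbit closure
\[
L_k \;:=\; \overline{\{n(\alpha_1,\ldots,\alpha_r,\theta_k) : n \in \Z\}} \;\subset\; \T^{r+1},
\]
a closed subgroup on which the relevant rotation acts minimally, it suffices to show that for \emph{some} $k$ the point $(0,\ldots,0,1/2)$ lies in $L_k$, since density of the forward orbit in $L_k$ then supplies arbitrarily large $n$ realizing the approximation. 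By Pontryagin duality $L_k$ is determined by its annihilator $L_k^\perp = \{(a,c) \in \Z^{r+1} : \sum_i a_i\alpha_i + c\theta_k \in \Z\}$, and the condition $(0,\ldots,0,1/2) \in L_k$ translates precisely to: every $(a,c) \in L_k^\perp$ has $c$ even.

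The heart of the argument is then a one-line dimension count, which is where the $\Q$-independence hypothesis enters. If this condition failed for every $k = 1,\ldots,r+1$, I would obtain for each $k$ an integer tuple $(a^{(k)}, c^{(k)})$ with $c^{(k)}$ odd (hence nonzero) and $\sum_i a_i^{(k)}\alpha_i + c^{(k)}\theta_k \in \Z$. Solving for $\theta_k$ forces $\theta_k \in \vect_\Q(1,\alpha_1,\ldots,\alpha_r)$ for every $k$, so that $\vect_\Q(1,\theta_1,\ldots,\theta_{r+1}) \subset \vect_\Q(1,\alpha_1,\ldots,\alpha_r)$; but the hypothesis gives $\dim_\Q \vect_\Q(1,\theta_1,\ldots,\theta_{r+1}) = r+2$, while the right-hand side has dimension at most $r+1$, a contradiction. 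The main (mild) difficulty I foresee is the bookkeeping in the duality translation, rigorously converting ``recurrence at scale $\eps$ along $D^\delta_{\lambda_1,\ldots,\lambda_{r+1}}$'' into the cleaner statement ``$(0,\ldots,0,1/2) \in L_k$ for some $k$''; once this is in place, the $\Q$-independence of $(\theta_1,\ldots,\theta_{r+1})$ pays for the entire argument.
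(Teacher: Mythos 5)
The paper does not actually prove Theorem~\ref{thkatz}: it is quoted from Katznelson's paper \cite{K} without a proof being reproduced, so there is no proof of record in this manuscript to compare your argument against. Assessing your proposal on its own terms, it is correct. The negative direction is exactly as easy as you say: for $n\in D^{\delta}_{\lambda_1,\ldots,\lambda_{r+1}}$ some coordinate $\lambda_k$ has $|\lambda_k^n+1|<\delta$, whence $|\lambda_k^n-1|\ge 2-\delta>1$, and the product rotation $R_{\lambda_1}\times\cdots\times R_{\lambda_{r+1}}$ witnesses failure of $(r+1)$-Bohr. For the positive direction, your reduction via Pontryagin duality is sound: $L_k$ is the monothetic closed subgroup of $\T^{r+1}$ generated by $(\alpha_1,\ldots,\alpha_r,\theta_k)$, its annihilator $L_k^{\perp}$ is as you describe, and since a closed subsemigroup of a compact group is a group, the forward orbit $\{n(\alpha,\theta_k): n\ge 0\}$ is dense in $L_k$; thus $(0,\ldots,0,1/2)\in L_k$ does yield infinitely many $n\in D^{\delta}$ with $\max_j|\mu_j^n-1|<\varepsilon$. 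The key dimension count is also correct, provided the hypothesis ``$(\theta_1,\ldots,\theta_{r+1})$ $\Q$-independent'' is read, as you implicitly do and as the theorem's validity forces, to mean that $1,\theta_1,\ldots,\theta_{r+1}$ are $\Q$-linearly independent (otherwise, e.g.\ $r=1$, $\theta_1=1/2$ already falsifies the statement): if every $L_k^\perp$ contained an element with odd $\theta_k$-coefficient, each $\theta_k$ would lie in $\vect_\Q(1,\alpha_1,\ldots,\alpha_r)$, forcing the $(r+2)$-dimensional space $\vect_\Q(1,\theta_1,\ldots,\theta_{r+1})$ into an $(r+1)$-dimensional one. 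I would only suggest making the interpretation of the $\Q$-independence hypothesis explicit, and spelling out the closed-subsemigroup-of-a-compact-group-is-a-group fact when claiming forward-orbit density, since that is the one nontrivial general fact you invoke.
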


The sets $D^{\delta }_{\lambda _{1},\ldots, \lambda _{r+1}}$ are ``large'' sets in the sense that they have positive density. It is possible to obtain from Theorem \ref{thkatz} many Bohr sets: given a sequence $(\delta _{r})_{r\ge 1}$ of numbers in $(0,1)$ and families $((\lambda _{1,r},\ldots, \lambda _{r+1,r}))_{r\ge 1}$, the set
$D=\bigcup_{r\ge 1}D^{\delta_{r} }_{\lambda _{1,r},\ldots, \lambda _{r+1,r}}$ is obviously a Bohr set. However, it is clear that the set $D$ is a Poincar\'e set: for each $r\ge 1$, the $(r+1)$-tuple $(\lambda _{1,r}, \lambda _{2,r}^{2}\ldots, \lambda _{r+1,r}^{r+1})$ is $\Q$-independent in the sense of Theorem \ref{thkatz}, and thus the set
$D^{\delta_{r} }_{\lambda _{1,r},\ldots, \lambda _{r+1,r}}$ contains, for some $q\ge 1$, the integers $q,2q,\ldots, (r+1)q$. This implies that for any \mpt\ $T$ of a probability space $(X, \mathcal{B},m)$, any set $A\in \mathcal{B}$ which is such that $m(T^{n}A\cap A)=0$ for each $n\in D^{\delta_{r} }_{\lambda _{1,r},\ldots, \lambda _{r+1,r}}$ is such that $m(A)<\frac{1}{r}$. The appearance of such sequences $(q,2q,\ldots, (r+1)q)$ comes from the particular structure of the sets $D^{\delta_{r} }_{\lambda _{1,r},\ldots, \lambda _{r+1,r}}$, and it is natural to wonder whether it is possible to construct other kinds of $r$-Bohr sets which are not Bohr, which would have a different arithmetical structure and come closer to a potential counterexample to Question \ref{q2}. It is the aim of this paper to provide an alternative construction of $r$-Bohr sets which are not Bohr, which has the advantage over the construction of \cite{K} to be both very flexible and completely explicit. Our main result can be stated as follows:

%
%

\begin{theorem}\label{th2} 
For each $r\geq 1$ there exist sets $\{n_{k}^{(r)}\}$ of integers which are $r$-Bohr but not $(2^{r-1}+1)$-Bohr, and which have the following simple structure:

$$\{n_{k}^{(r)}\}=\{n_{k,0}^{(r)}\}\cup\bigcup_{A\subseteq \{1,\ldots, r-1\}}\{n_{k,A}^{(r)}\}$$
where
$$
\{n_{k,0}^{(r)}\}=\bigcup_{N\ge 1}B_{N,0}^{(r)}
\quad \textrm{ and }\quad 
\{n_{k,A}^{(r)}\}=\bigcup_{N\ge 1}B_{N,A}^{(r)}, \quad  A\subseteq\{1,\ldots, r-1\}
$$
and
\begin{eqnarray*}
B_{N,0}^{(r)}&=& B_{\varepsilon _{N}^{(r)},0}^{(r)}=\{H_{N}q+ 1 \textrm{ ; } 1\le q\le Q_{N}^{(r)}\}\\
B_{N,\varnothing}^{(r)}&=& B_{\varepsilon _{N}^{(r)},\varnothing}^{(r)}=\{H_N  \Delta_{N,\varnothing}^{(r)}\}\\
B_{N,A}^{(r)}&=& B_{\varepsilon _{N}^{(r)},A}^{(r)}= \{H_N\Delta    _{N,A}^{(r)} (L_N j + 1)\textrm{ ; } 1\le j\le \Theta_N^{(r)} \},
\end{eqnarray*}
where $(L_{N})_{n\ge 1}$ is a rapidly growing sequence of integers, $(\Delta    _{N,A}^{(r)})_{N\ge 1}$, $(\Theta_N^{(r)})_{N\ge 1}$ and $(Q_{N}^{(r)})_{N\ge 1}$ are sequences of integers depending from $(L_{N})_{N\ge 1}$, and $(H_{N})_{N\ge 1}$ is a very rapidly increasing sequence of integers independent from all the other parameters.
\end{theorem}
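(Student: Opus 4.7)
The plan is to construct the parameters $(H_N,L_N,Q_N^{(r)},\Theta_N^{(r)},\Delta_{N,A}^{(r)})$ inductively in $N$, driven by a vanishing target precision $\varepsilon_N^{(r)}\to 0$, and then verify the two required properties separately. For the parameter choice, I would fix $(H_N)$ to grow super-exponentially, with each $H_N$ divisible by a rapidly growing factorial so that $H_N\alpha$ is forced close to an integer whenever $\alpha$ is rational with small denominator, and $(L_N)$ to be a rapidly growing but $o(H_N)$ sequence. The integers $\Delta_{N,A}^{(r)}$ are chosen so that $H_N\Delta_{N,A}^{(r)}$ encodes the coarse scale matched to the subset $A$, while $Q_N^{(r)}$ and $\Theta_N^{(r)}$ are taken large enough for the arithmetic progressions $B_{N,0}^{(r)}$ and $B_{N,A}^{(r)}$ to cover all residue classes needed at their respective scales.

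\emph{The $r$-Bohr direction.} Given $r$ angles $\alpha_1,\ldots,\alpha_r$ and $\epsilon>0$, the plan is to run a case analysis based on the joint Diophantine type of the $\alpha_j$: split the coordinates into ``rational-like'' (those $j$ with $\|H_N\alpha_j\|$ small for an appropriate $N$) and ``irrational-like'' (the remainder). The purely rational case is handled by the arithmetic progression $B_{N,0}^{(r)}=\{H_Nq+1\}$ via a pigeonhole on residue classes modulo the small common denominator; the purely coarse case where $\|H_N\Delta_{N,\varnothing}^{(r)}\alpha_j\|$ can be made small for every $j$ simultaneously is handled by the singleton $B_{N,\varnothing}^{(r)}$ after an appropriate choice of $\Delta_{N,\varnothing}^{(r)}$; and the mixed configurations, indexed by which of the first $r-1$ coordinates need fine-scale adjustment, are handled by $B_{N,A}^{(r)}=\{H_N\Delta_{N,A}^{(r)}(L_Nj+1):1\le j\le\Theta_N^{(r)}\}$, in which the factor $L_Nj+1$ supplies a second-scale Diophantine correction for the irrational-like coordinates while $H_N\Delta_{N,A}^{(r)}$ zeros out the rational-like ones. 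The $2^{r-1}$ subsets $A\subseteq\{1,\ldots,r-1\}$ correspond exactly to the partitions of these first $r-1$ coordinates into the two types.

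\emph{The non-$(2^{r-1}+1)$-Bohr direction.} I would produce $2^{r-1}+1$ angles $\beta_0$ and $(\beta_A)_{A\subseteq\{1,\ldots,r-1\}}$, one for each block type, each bounding $\|n\beta\|$ uniformly away from $0$ on its matched block. Take $\beta_0=1/M$ for an integer $M\ge 2$ dividing all sufficiently large $H_N$; then $(H_Nq+1)\beta_0\equiv 1/M\pmod 1$ is $\epsilon$-far from $0$ for every $n\in B_{N,0}^{(r)}$. For each $A$, construct $\beta_A$ by a Cantor-type nested-interval procedure on $\T$, imposing at each stage $N$ the pair of Diophantine conditions $\|H_N\Delta_{N,A}^{(r)}L_N\beta_A\|\le \epsilon/\Theta_N^{(r)}$ and $\|H_N\Delta_{N,A}^{(r)}\beta_A\|\ge 2\epsilon$; these are compatible because the two scales $H_N\Delta_{N,A}^{(r)}L_N$ and $H_N\Delta_{N,A}^{(r)}$ differ by the very large factor $L_N$. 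The first condition makes $H_N\Delta_{N,A}^{(r)}(L_Nj+1)\beta_A$ lie within $\epsilon$ of $H_N\Delta_{N,A}^{(r)}\beta_A$ modulo $1$ for every $1\le j\le\Theta_N^{(r)}$, and the second pins this quantity $\epsilon$-away from $0$. Since every $n\in\{n_k^{(r)}\}$ sits in exactly one block type, the $2^{r-1}+1$ angles jointly witness the failure of $(2^{r-1}+1)$-Bohr.

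\emph{Main obstacle.} The $r$-Bohr property is the delicate direction: one must check that the $2^{r-1}+1$ block types exhaustively cover every joint Diophantine configuration of $r$ angles, with the parameters $H_N$, $L_N$, $\Delta_{N,A}^{(r)}$, $Q_N^{(r)}$ and $\Theta_N^{(r)}$ coordinated so that, for any configuration and any $\epsilon>0$, at least one block contains a joint $\epsilon$-recurrent integer. The non-recurrence direction, by contrast, reduces to a relatively clean two-scale Cantor construction on the matched rotations.
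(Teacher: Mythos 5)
Your plan for the non-recurrence direction is essentially sound and matches the spirit of the paper: the authors also use a nested-interval (Cantor-type) construction, via a quantitative lemma on lacunary sequences, to produce one angle $\mu_0$ adapted to $\{n_{k,0}^{(r)}\}$ and one angle $\mu_A$ adapted to each $\{n_{k,A}^{(r)}\}$, exploiting exactly the two-scale gap between $H_N\Delta_{N,A}^{(r)}$ and $H_N\Delta_{N,A}^{(r)}L_N$, and then the huge gap to $H_{N+1}$. Your simplification $\beta_0 = 1/M$ with $M\mid H_N$ for all $N$ is a legitimate shortcut here.

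The $r$-Bohr direction, however, is where your proposal has a genuine gap, and the mechanism you sketch is not the right one. You propose to partition the $r$ coordinates individually into ``rational-like'' and ``irrational-like'' classes, with the subset $A\subseteq\{1,\ldots,r-1\}$ recording which of the first $r-1$ coordinates need fine-scale treatment, and the factor $L_Nj+1$ supplying a ``second-scale Diophantine correction for the irrational-like coordinates.'' This cannot work as stated: the block $B_{N,A}^{(r)}$ has only a \emph{single} free parameter $j$, so it can supply only one fine-scale degree of freedom. If two or more of the $\alpha_i$ are independently badly approximable, no single choice of $j$ can make $H_N\Delta_{N,A}^{(r)}(L_Nj+1)\alpha_i$ small for all of them simultaneously; a per-coordinate rational/irrational classification simply does not reduce the problem to a one-parameter search. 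The paper's actual dichotomy is collective, not coordinatewise: either there exists a near-rational \emph{relation} $\lambda_1^{Ha_1}\cdots\lambda_r^{Ha_r}\approx 1$ with small $|a_i|$, or there is none. In the second case, a quantitative simultaneous-approximation theorem of Kannan and Lov\'asz (Corollary \ref{cor0}) directly produces $q\le Q^{(r)}$ with $\lambda_i^{Hq+1}\approx 1$ for all $i$ at once, and one lands in $B_{\varepsilon^{(r)},0}^{(r)}$. In the first case, the relation is used to drop the effective dimension by one: one treats $(\lambda_1,\lambda_2^{a_2}\cdots\lambda_r^{a_r})$ as a $2$-dimensional system to extract a good multiple $Hp^{(2)}$, and then applies the inductive hypothesis at rank $r-1$ with the rescaled base $\tilde H = Hp^{(2)}$ to $(\lambda_2,\ldots,\lambda_r)$. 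The $2^{r-1}$ subsets $A$ index the sequence of branches taken in this two-way dichotomy down the induction, not a classification of individual coordinates. So the combinatorics of the index set $A$ is structural (which branch of the Kannan--Lov\'asz dichotomy at each stage), and the key missing input in your sketch is a quantitative simultaneous inhomogeneous Diophantine approximation theorem in the ``no relation'' case, together with the rank-reduction step in the ``relation exists'' case.
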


The arithmetic structure of these sets $\{n_{k}^{(r)}\}$ is very explicit, and one can construct from them
many examples of Bohr sets. But contrary to the sets from \cite{K}, it is for most choices of parameters in the construction not clear whether these sets are recurrent sets or not. So our construction does not solve Question \ref{q2}, but highlights how delicate this question is.
\par\smallskip
The paper is organized as follows: Section $2$ is devoted to the proof of Theorem \ref{th2} in the case where $r=1$ (here it is completely elementary). The proof of Theorem \ref{th2} for general $r$ is the object of Sections $3$ and $4$.
 Lastly, we construct in Section $6$ some Bohr sets obtained from Theorem \ref{th2}, and present some final comments and remarks.
\par\smallskip
In the whole paper we will denote by $R_{\lambda}$ the rotation on $\T$ associated to $\lambda\in\T$.

\section{Proof of Theorem \ref{th2} for $r=1$}

Let us begin by recalling what we want to prove: we are looking for a set $\{n_{k}\}$ of the form given in Theorem \ref{th2} which is recurrent for all circle rotations, i.e. such that
\begin{center}
 for any $\lambda \in\T$, any $\varepsilon >0$, there exists a $k$ such that $|\lambda ^{n_{k}}-1|<\varepsilon $
\end{center}
but which is not recurrent for all products of two circle rotations, i.e. for which there exist $\mu _{0},\mu _{1}\in\T$ and $\delta >0$ such that
\begin{center}
for any $k\ge 0$, $\max(|\mu _{0}^{n_{k}}-1|, |\mu_{1}^{n_{k}}-1|)>\delta $. 
\end{center}
We will use the following notation
$$M_{1}=\inf_{\{\theta \}\not =0}\dfrac{|e^{2i\pi \theta }-1|}{\{\theta \}}\quad \textrm{ and }\quad 
M_{2}=\sup_{\{\theta \}\not =0}\dfrac{|e^{2i\pi \theta }-1|}{\{\theta \}}\cdot$$
We will denote by $\lfloor\theta \rfloor$ the integer part of the real number $\theta $, and by $\{\theta \}$ its distance to $\Z$. We will also need the following simple fact:

\begin{lemma}\label{lem1bis}
There exist two universal constants $C,C'\ge 1$ such that for any $\gamma>0$ and $\varepsilon>0$, for any $\mu \in\T$, the following holds true:

if $\gamma<|\mu-1|<\varepsilon$, then for any $\nu\in\T$ there exists an integer $p$ with  $1\le p\le \lfloor\frac{C'}{\gamma}\rfloor$ such that
$|\mu^p-\nu|\le C \varepsilon$.
\end{lemma}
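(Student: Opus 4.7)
The plan is to translate the multiplicative statement into an additive one on $\R/\Z$ and then apply a one-dimensional density argument to the orbit of $\mu$. First I would write $\mu=e^{2i\pi\theta}$; the definition of $M_1,M_2$ then converts the hypothesis $\gamma<|\mu-1|<\varepsilon$ into the two-sided bound $\gamma/M_2<\{\theta\}<\varepsilon/M_1$. Thus the rotation $R_\mu$ acts on $\T$ by an angle of magnitude $\{\theta\}$, and this magnitude is controlled both from above (by $\varepsilon/M_1$) and from below (by $\gamma/M_2$).

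Next, I would set $P=\lfloor 1/\{\theta\}\rfloor+1$, so that $P\{\theta\}\ge 1$. The points $\{p\theta\}$ for $p=1,\ldots,P$ then wrap at least once around $\T$ in steps of length $\{\theta\}$, which (after a short check comparing the values $t,2t,\ldots,(P-1)t$ with the ``wrap-around'' value $Pt-1$) implies that the set $\{\mu^p:1\le p\le P\}$ is $\{\theta\}/2$-dense in $\T$. Consequently, given any $\nu=e^{2i\pi\psi}\in\T$, a pigeonhole step produces some $p\in\{1,\ldots,P\}$ with $\{p\theta-\psi\}\le\{\theta\}/2$, and using the definition of $M_2$ once more we get $|\mu^p-\nu|\le M_2\{\theta\}/2\le (M_2/2M_1)\varepsilon$. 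Hence $C=M_2/(2M_1)$ works.

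Finally I would check the size of $P$: the lower bound $\{\theta\}>\gamma/M_2$ gives $P<M_2/\gamma+1$, and since we may assume $\gamma<2$ (otherwise no $\mu$ with $\gamma<|\mu-1|$ exists at all, as $|\mu-1|\le 2$), this is bounded by $(M_2+2)/\gamma$. So any $C'$ slightly larger than $M_2+2$ yields $P\le\lfloor C'/\gamma\rfloor$. I do not expect any real obstacle; the argument is genuinely elementary, and the only mildly delicate point is ensuring that the exponent $p$ produced by the pigeonhole step is strictly positive, which is why one includes the ``$+1$'' in the definition of $P$.
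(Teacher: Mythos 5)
Your proof is correct and follows essentially the same route as the paper's: translate the hypothesis into a two-sided bound $\gamma/M_2<\{\theta\}<\varepsilon/M_1$ on the rotation angle, observe that taking enough iterates to wrap once around the circle produces a $\{\theta\}$-scale net, and then pigeonhole. Your version even handles the bound on the number of iterates a bit more carefully (via $P=\lfloor 1/\{\theta\}\rfloor+1$, which guarantees $P\{\theta\}\ge 1$ without any edge-case worry) and obtains the slightly sharper constant $C=M_2/(2M_1)$ in place of the paper's $M_2/M_1$.
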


\begin{proof}[Proof of Lemma \ref{lem1bis}]
Write $\mu $ as $\mu =e^{2i\pi\theta }$, $|\theta |\le\frac{1}{2}$. Without loss of generality, we can suppose that $\theta >0$. We have $M_{1}\theta \le |e^{2i\pi \theta }-1|\le M_{2}\theta $, and thus
$\frac{\gamma }{M_{2}}<\theta <\frac{\varepsilon }{M_{1}}$. Let $\kappa =\lfloor \frac{M_{2}}{\gamma }\rfloor$. Since $\{\theta \}>\frac{1}{\kappa }$, the fractional parts of the $\kappa $ numbers $\theta , 2\theta , \ldots, \kappa \theta $ form a $\theta $-net of $(0,1)$: for any $\alpha \in\R$
 there exists a $p$ with $1\le p\le \kappa $ such that $\{p\theta -\alpha \}\le \theta <\frac{\varepsilon }{M_{1}}$. Hence
 $$|e^{2i\pi p\theta }-e^{2i\pi\alpha }|\le M_{2}\frac{\varepsilon }{M_{1}},$$ and this proves Lemma \ref{lem1bis} with $C=\frac{M_{2}}{M_{1}}$ and $C'=M_{2}$.
\end{proof}

The key lemma for the proof of Theorem \ref{th2} in the $1$-dimensional case is the following:

\begin{lemma}\label{lem1}
For any $\varepsilon >0$ there exist two positive integers $\Sigma, \Theta \ge 1  $ such that for any $\lambda \in\T$,
any integers $L\ge 1$ and
$H\ge 1$, and any $S\in\Z$, one of the following two assertions is true: 
either
\begin{eqnarray}\label{eq2}
 |\lambda ^{H \Sigma L  }-1|<\varepsilon 
\end{eqnarray}

or
\begin{eqnarray}\label{eq1}
\textrm{there exists a } j\in \{1,\ldots, \Theta  \} \textrm{ such that }|\lambda ^{HLj+S}-1|<\varepsilon .
\end{eqnarray}
\end{lemma}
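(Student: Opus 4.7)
The plan is to reduce the statement to a pure circle-rotation question and then combine a Dirichlet-type pigeonhole with Lemma \ref{lem1bis}. Setting $\mu := \lambda^{HL}$ and $\nu := \lambda^{-S}$, the identities $|\lambda^{H\Sigma L}-1| = |\mu^\Sigma - 1|$ and $|\lambda^{HLj+S}-1| = |\mu^j - \nu|$ show that the parameters $H,L,S$ play no further role. It suffices to prove the following: given $\varepsilon>0$ (which I may assume to be smaller than $1$), there exist $\Sigma,\Theta\ge 1$ such that for every pair $(\mu,\nu)\in\T^2$, either $|\mu^\Sigma-1|<\varepsilon$ or $|\mu^j-\nu|<\varepsilon$ for some $j\in\{1,\ldots,\Theta\}$.

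To manufacture a useful power of $\mu$, I would fix $M := \lceil CM_2/\varepsilon\rceil$ (with $C,C'$ as in Lemma \ref{lem1bis}) and $\Sigma := M!$, and write $\mu=e^{2i\pi\theta}$. The pigeonhole principle applied to the $M+1$ points $0,\theta,2\theta,\ldots,M\theta$ modulo $1$ distributed among $M$ arcs of length $1/M$ produces an integer $d\in\{1,\ldots,M\}$ with $\{d\theta\}\le 1/M$. Writing $\mu^d = e^{2i\pi\eta}$ with $|\eta|=\{d\theta\}$, the essential feature of $\Sigma=M!$ is that $d\mid\Sigma$, so $\Sigma = dk$ for a positive integer $k\le\Sigma$.

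The proof then dichotomizes on the size of $|\eta|$, with threshold $\gamma := \varepsilon/(2\pi\Sigma)$. If $|\eta|\le\gamma$, then since $|k\eta|\le\Sigma\gamma=\varepsilon/(2\pi)<1/2$, one has $|\mu^\Sigma-1|=|e^{2i\pi k\eta}-1|\le M_2\, k|\eta|\le M_2\Sigma\gamma\le\varepsilon$, which is (\ref{eq2}). Otherwise $\gamma<|\eta|\le 1/M$, so $M_1\gamma<|\mu^d-1|\le M_2/M\le\varepsilon/C$; Lemma \ref{lem1bis} applied to $\mu^d$ (with its ``$\gamma$'' taken to be $M_1\gamma$ and its ``$\varepsilon$'' taken to be $\varepsilon/C$) then yields an integer $p\le C'/(M_1\gamma)$ with $|(\mu^d)^p-\nu|\le C\cdot\varepsilon/C=\varepsilon$; setting $j:=dp$ and $\Theta:=\lceil MC'/(M_1\gamma)\rceil$ gives (\ref{eq1}).

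The only genuinely delicate point is the simultaneous calibration of $M$, $\Sigma$, and $\gamma$. Each of the three requirements---the Lemma \ref{lem1bis} error $CM_2/M$ lying below $\varepsilon$, divisibility of $\Sigma$ by every $d\in\{1,\ldots,M\}$ so the first case can be iterated up to $\Sigma$, and $\Sigma\gamma$ still being small enough to close the first-case bound---forces a specific quantitative relation, and they are mutually compatible only because $\Theta$ is permitted to grow like $M\cdot M!/\varepsilon$, far faster than $\Sigma$ itself. No single step is hard once this split is isolated; the bookkeeping of constants is the only work.
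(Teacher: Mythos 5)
Your proof is correct and takes essentially the same route as the paper's: both rely on a Dirichlet/pigeonhole step to produce $d\le M$ with $|\mu^d-1|$ small, the factorial choice $\Sigma=M!$ so that $d\mid\Sigma$, and a dichotomy on the size of $|\mu^d-1|$ resolved by powering up in the small case and by Lemma \ref{lem1bis} in the other case, with the reduction to $(\mu,\nu)=(\lambda^{HL},\lambda^{-S})$ at the outset being a cosmetic but clarifying reorganization. The one small quibble is that a couple of your estimates land exactly on $\varepsilon$ rather than strictly below it (for instance $M_2\Sigma\gamma=\varepsilon$ since $M_2=2\pi$, and $M_2/M\le\varepsilon/C$ may be an equality), but this is immediately remedied by inserting an extra factor of $2$ into the threshold $\gamma$ and the constant $M$.
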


We will apply Lemma \ref{lem1} with two values of $S$ only: $S=1$ and $S=H$. In the first case the set of integers appearing in (\ref{eq1}) is simply a shifted arithmetic progression of step $HL$, and in the second case the set we get a multiple of a shifted arithmetic progression of step $L$.

\begin{proof}[Proof of Lemma \ref{lem1}]
The idea of the proof can be summarized as follows: define an integer $\kappa$ as $\kappa =\lfloor\frac{4\pi C C'}{\varepsilon }\rfloor$. If 
$\lambda ^{HLl}$ is not too close to $1$ for some $l\in\{1,\ldots, \kappa \}$, then by Lemma \ref{lem1bis}
any $\mu \in\T$ (in particular $\lambda^{-S} $) can be $\varepsilon $-approximated by a power of $\lambda^{HL} $ which is not too large, and (\ref{eq1}) is true. If $\lambda ^{HLl}$ is too close to $1$ for each $l\in\{1,\ldots, \kappa \}$, then (\ref{eq2}) holds true. Let us now be more precise, and consider the quantity $\gamma=\min_{l=1,\ldots,\kappa }|\lambda^{HLl}-1|$. By the Dirichlet principle, we know that $\gamma \le \frac{M_{2}}{\kappa }< \frac{\varepsilon }{C}$ since $M_{2}=C'$. There are two cases to consider.
\par\smallskip
\textbf{Case 1:} we have $\gamma < \frac{\varepsilon}{4\pi C}\frac{1}{\kappa !}\cdot$
\par\smallskip
This means that there exists an $l\in \{1,\ldots, \kappa \}$ such that $|\lambda^{HLl}-1|<
\frac{\varepsilon}{4\pi C}\frac{1}{\kappa !}$. Since $1\le l \le \kappa $, $l$ divides $\kappa !$, and so it makes sense to write
\begin{eqnarray*}
|(\lambda ^{HLl})^{\frac{\kappa !}{l}}-1|=|\lambda^{H L\kappa !}-1| <\frac{\kappa !}{l}\frac{\varepsilon}{4\pi C}\frac{1}{\kappa !} \le \frac{\varepsilon}{4\pi C} \cdot
\end{eqnarray*}
So (\ref{eq2}) is true with $\Sigma  =\kappa !$.
\par\smallskip
\textbf{Case 2:} we have $\gamma \ge \frac{\varepsilon}{4\pi C}\frac{1}{\kappa !}\cdot$
\par\smallskip
This implies that there exists an $l\in \{1,\ldots, \kappa \}$ such that
\begin{eqnarray*}
\frac{\varepsilon}{4\pi C}\frac{1}{\kappa !}\le|\lambda^{HL l}-1|\le\gamma < \frac{\varepsilon}{ C} .
\end{eqnarray*}
By Lemma \ref{lem1bis}, there exists an integer $p\in\{1,\ldots, \lfloor \frac{4\pi C C'}{\varepsilon}\kappa !\rfloor\}$
 such that
\begin{eqnarray*}
|\lambda^{HLlp}-\lambda^{-S}|=|\lambda^{HLlp+S}-1| < \varepsilon .
\end{eqnarray*}
Since $1\le lp\le \kappa \lfloor \frac{4\pi C C'}{\varepsilon}\kappa !\rfloor$ we get, setting $\Theta =\kappa \lfloor \frac{4\pi C C'}{\varepsilon}\kappa !\rfloor$, a $j\in\{1,\ldots, \Theta \}$ such that
$$|\lambda ^{HLj+S}-1|< \varepsilon  $$
and (\ref{eq1}) is  true. Lemma \ref{lem1} is proved.
\end{proof}

\begin{remark}\label{rem00}
Let us record here for further use the expression of $\Sigma$ and $\Theta$ which we obtained in the proof of Lemma \ref{lem1}:
$$\Sigma= (\lfloor\frac{4\pi C C'}{\varepsilon }\rfloor)!\quad \textrm{and} \quad \Theta =\lfloor\frac{4\pi C C'}{\varepsilon }\rfloor
\lfloor \frac{4\pi C C'}{\varepsilon}(\lfloor\frac{4\pi C C'}{\varepsilon }\rfloor)!\rfloor.$$ Observe that $\Theta$ is much larger than $\Sigma$ and that given any integer $A\ge 1$, one can ensure by taking $\varepsilon$ sufficiently small that $A$ divides $\Sigma$.
\end{remark}

We are now ready for the proof of Theorem \ref{th2} for $r=1$.

\begin{proof}[Proof of Theorem \ref{th2} for $r=1$]
We construct two sets $\{n_{k,0}\}$ and $\{n_{k,1}\}$ by induction on $N$ and by blocks, applying repeatedly Lemma \ref{lem1}. We start by taking at the first step $\varepsilon _{1}=2^{-1}$. Lemma \ref{lem1} gives us a $\Theta _1$ and a $\Sigma _{1}$, then we choose $L_{1}=1$, $S_{1}=1$, a large even number $H_1$, and we take for the first $\Theta _1$ elements of the set
$\{n_{k,0}\}$ the numbers
$$H_1+1, 2H_1+1,\ldots,  H_1 \Theta _1 +1$$
which are all odd. For the first element of the set $\{n_{k,1}\}$ we take the number $H_1 \Sigma _1$, which is even. Then we take $\varepsilon _2=2^{-2}$, obtain $\Theta _{2}$ and $\Sigma _{2}$,  then take $L_{2}=1$, $S_{2}=1$ and $H_2$ very large and even (much larger than $ H_1 \Theta _1$ in particular). We continue the set $\{n_{k,0}\}$ with the numbers
$$H_2+1, 2H_2+1,\ldots,  H_2 \Theta _2 +1,$$ and we take for the second element of the set $\{n_{k,1}\}$ the number $H_2 \Sigma  _2$. We continue in this fashion:
$$\{n_{k,0}\}=\bigcup_{p\ge 1}\{H_{p}+1, 2H_{p}+1,\ldots, H_{p} \Theta  _{p}+1\}$$
and
$$\{n_{k,1}\}=\{H_{p}\Sigma  _{p} \textrm{ ; } p\ge 1\}$$
where $\Sigma  _{p}$ and $\Theta  _{p}$ result from the application of Lemma \ref{lem1} to $\varepsilon _{p}=2^{-p}$, where
 $L_{p}=1$, $S_{p}=1$, and the sequence $(H_{p})$ consists of even numbers and increases very rapidly. Observe that these two sets are disjoint, since all the elements in the first set are odd while all elements in the second set are even, and that $\Sigma  _{p}$ is much smaller than $\Theta _{p}$ by Remark \ref{rem00}, so that the set $\{n_{k}\}=\{n_{k,0}\}\cup
\{n_{k,1}\}$ looks like this:
$$\{n_{k}\}=\bigcup_{p\ge 1}\{H_{p}+1, 2H_{p}+1,\ldots, H_{p}(\Sigma  _{p}-1)+1, 
H_{p}\Sigma _{p}, H_{p}\Sigma  _{p}+1, \ldots, H_{p}\Theta  _{p}+1\}.$$
Now by Lemma \ref{lem1}, it is clear that for all $\lambda \in\T$ and all $p\ge 1$ there exists a $k$ such that
$|\lambda ^{n_{k}}-1|<2^{-p}$, so $\{n_{k}\}$ is a recurrence set for any rotation of $\T$.

It remains to find $\mu _{0},\mu _{1}\in\T$ such that $\{n_{k}\}$ is not recurrent for $R_{\mu _{0}}\times R_{\mu _{1}}$, and this is not difficult thanks to the particular structure of the set $\{n_{k}\}$. We will need the following lemma, which is implicit in \cite{BaGr}:

\begin{lemma}\label{lem2}
There exists a positive constant $M$ such that if 
 $(m_{k})_{k\ge 1}$ is any sequence of integers such that $\frac{m_{k+1}}{m_{k}}>2$ for all $k \ge 1$, there exist uncountably many $\lambda \in\T$ such that for all $k\ge 1$,
$$|\lambda ^{m_{k}}-1|\le  M  \,\frac{m_{k}}{m_{k+1}}\cdot$$
Moreover, the set of such $\lambda$'s is $\frac{6\pi}{m_1}$-dense in $\T$.
 In particular if $\frac{m_{k+1}}{m_{k}}\to +\infty $, there exists an element $\lambda \in\T$ with $|\lambda+1|\le \frac{6\pi}{m_1}$ such that 
$\lambda ^{m_{k}}$ tends to $1$ at the rate $\frac{m_{k}}{m_{k+1}}$.
\end{lemma}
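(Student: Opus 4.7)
My plan is as follows. Writing $\lambda=e^{2i\pi\theta}$ with $\theta\in[0,1)$ and using the elementary bound $|e^{2i\pi y}-1|\le 2\pi\{y\}$ (where $\{y\}$ denotes the distance from $y$ to $\Z$), the lemma reduces to exhibiting uncountably many $\theta\in[0,1)$ satisfying $\{m_k\theta\}\le m_k/m_{k+1}$ for all $k\ge 1$; this will yield $M=2\pi$ (the value $6\pi$ in the density statement leaves comfortable slack). I would realise the set of such $\theta$'s as the residue of a nested Cantor-type construction, with the initial interval used to control density.

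For each $k\ge 1$ set
\[
F_k=\bigl\{\theta\in[0,1):\{m_j\theta\}\le m_j/m_{j+1}\textrm{ for every }1\le j\le k\bigr\},
\]
a disjoint finite union of closed intervals of half-width $1/m_{k+1}$ centered at certain points of the form $p/m_k$; clearly $F_{k+1}\subseteq F_k$. The whole argument hinges on one splitting step: every connected component of $F_k$ contains at least two \emph{disjoint} components of $F_{k+1}$. Indeed, inside the component of $F_k$ centered at $p/m_k$, the admissible centers of $F_{k+1}$-components are the numbers $p'/m_{k+1}$ with $p'\in\Z$ lying in the window $[m_{k+1}p/m_k-1,\,m_{k+1}p/m_k+1]$ of length $2$, which always contains at least two consecutive integers; each such sub-interval has half-width $1/m_{k+2}$ and their centers are at distance $\ge 1/m_{k+1}$, so they are disjoint thanks to $2/m_{k+2}<1/m_{k+1}$, i.e. precisely the hypothesis $m_{k+2}>2m_{k+1}$. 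Iterating yields a binary tree of nested non-empty closed intervals whose intersection is a compact perfect subset of $\bigcap_{k}F_k$, hence uncountable.

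The density assertion then follows from the inclusion $F_1\subseteq\bigcup_{p=0}^{m_1-1}[p/m_1-1/m_2,\,p/m_1+1/m_2]$ together with the fact that the Cantor construction can be launched inside any chosen component of $F_1$: given $\theta^*\in[0,1)$ one places some $\theta$ of the constructed set within $\frac{1}{2m_1}+\frac{1}{m_2}\le\frac{1}{m_1}$ of $\theta^*$, which translates into distance at most $2\pi/m_1<6\pi/m_1$ on $\T$. The ``moreover'' clause is obtained by applying this density to $\lambda_0=-1$, combined with $|\lambda^{m_k}-1|\le 2\pi\, m_k/m_{k+1}\to 0$.

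The single delicate point is the splitting step: arranging the constants so that \emph{both} (i) at least two admissible children exist at every stage \emph{and} (ii) the two chosen sub-intervals are disjoint. These two requirements pull in opposite directions, and the assumption $m_{k+1}/m_k>2$ is used in a tight way --- it is the only place where the growth hypothesis enters, and it is precisely what forces the constant $M$ to be universal while still producing an uncountable Cantor set.
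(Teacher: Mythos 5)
Your proposal follows exactly the same strategy as the paper --- a nested Cantor construction based on the sets $F_k=\{\theta:\{m_j\theta\}\le c\,m_j/m_{j+1}\text{ for }1\le j\le k\}$ --- but your constants are off by a factor of $2$, and this creates a genuine gap. You take $c=1$ (half-width $1/m_{k+1}$), whereas the paper works with $c=2$ (half-width $2/m_{k+1}$, hence $M=2M_2=4\pi$ rather than your claimed $2\pi$). With $c=1$ the splitting step simply cannot be made: a parent interval of length $2/m_{k+1}$ must contain two \emph{disjoint full} children of length $2/m_{k+2}$ whose centers are spaced $1/m_{k+1}$ apart, which forces $2/m_{k+1}>2/m_{k+2}+2/m_{k+1}$, an impossibility. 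Your argument disguises this by only tracking the window $[m_{k+1}p/m_k-1,\,m_{k+1}p/m_k+1]$ of \emph{centers}, which indeed contains two integers $p'$, but that only guarantees two centers $p'/m_{k+1}$ lie inside the parent, not that the corresponding intervals of half-width $1/m_{k+2}$ do; a center sitting at the boundary of the parent contributes only a degenerate or truncated piece. Relatedly, your initial claim that $F_k$ is always a disjoint union of \emph{full} closed intervals of half-width $1/m_{k+1}$ is false for $k\ge 2$: when a level-$k$ interval straddles the boundary of a level-$(k-1)$ component it gets cut, and once truncated components appear the two-children-per-node induction must be re-proved from scratch. (You flag the splitting step as ``the single delicate point'' and correctly identify conditions (i) existence of two children and (ii) disjointness, but you miss the crucial third condition (iii) containment in the parent, and it is precisely (i)$+$(iii) that fails at $c=1$.)

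The paper's choice $c=2$ gives exactly the slack needed. In $\theta$-space the parent has length $4/m_{k+1}$ and the children have length $4/m_{k+2}$ with spacing $1/m_{k+1}$; an interval of length $\ell$ contains two full children as soon as $\ell>4/m_{k+2}+2/m_{k+1}$, and here $\ell=4/m_{k+1}$, so the requirement reduces to $2/m_{k+1}>4/m_{k+2}$, i.e.\ $m_{k+2}>2m_{k+1}$ --- the hypothesis, used with \emph{strict} slack. Because the chosen children are always \emph{full} intervals centered at some $p/m_{k+1}$, the induction is self-sustaining and no truncation issues arise. If you want to keep your structure, replace your $F_k$ by $\{\theta:\{m_j\theta\}\le 2m_j/m_{j+1}\text{ for }1\le j\le k\}$, at each step choose two full level-$(k+1)$ intervals inside the previously chosen full level-$k$ interval (which the computation above justifies), and accept the resulting constant $M=4\pi$. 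The density estimate and the ``moreover'' clause then go through as you wrote them, with the $6\pi/m_1$ bound coming out exactly as in the paper.
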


For completeness's sake we provide a short proof of Lemma \ref{lem2}.

\begin{proof}[Proof of Lemma \ref{lem2}]
For any $k\ge 1$, let $E_{k}$ be the set
$$E_{k}=
\left\{\lambda =e^{2i\pi\theta }\in \T\textrm{ ; } \{m_{k}\theta \} \in \left[0, 2 \frac{m_{k}}{m_{k+1}}\right]\right\}.$$ The set $E_{k}$ is the union of a collection of disjoint closed sub-arcs of $\T$ of length 
$\frac{8\pi}{m_{k+1}}$. We write this collection as $\{I_{j}^{(k)}\}$.  The distance between two consecutive such sub-arcs is equal to
$\frac{2\pi}{m_{k}}-\frac{8\pi}{m_{k+1}}$.
So any arc $I$ of $\T$ of length greater than $\frac{4\pi}{m_{k}}+\frac{8\pi}{m_{k+1}}$ contains two arcs of the collection $\{I_{j}^{(k)}\}$. Now observe that
$\frac{8\pi}{m_{k+1}}>\frac{4\pi}{m_{k+1}}+\frac{8\pi}{m_{k+2}}$, because
$m_{k+2}>2m_{k+1}$.
It follows that any arc $I_{j}^{(k)}$ contains two disjoint arcs of the collection $\{I^{(k+1)}_{j'}\}$, and in this way we construct a Cantor-type subset $K$ of $(0,1)$ such that for all $k$ and all $\theta \in K$, $\{m_{k}\theta\} \le 2 \frac{m_{k}}{m_{k+1}}$. So any $\lambda =e^{2i\pi\theta }$ with $\theta \in K$ satisfies 
$$|\lambda ^{m_{k}}-1|\le 2M_{2} \,\frac{m_{k}}{m_{k+1}}\quad \textrm{for any } k\ge 1.$$ 
Since any subarc of the set $E_1$ contains a $\lambda=e^{2i\pi\theta}$ with $\theta\in K$, the set of such $\lambda$'s is $(\frac{2\pi}{m_1}+\frac{8\pi}{m_2})$-dense in $\T$, so it is $\frac{6\pi}{m_1}$-dense in $\T$.
 Lemma \ref{lem2} is proved with $M=2M_{2}$.
\end{proof}
Let us now go back to the proof. The crucial observation is that the sequence $(H_{p})_{p\ge 1}$ may be chosen as rapidly growing as we want to. Let $\mu_{0}\in\T$,  given by Lemma \ref{lem2}, be such that  $|\mu _{0}+1|\le\frac{6\pi}{H_1}$
and
for all $p\ge 1$, $$|\mu _{0}^{H_{p}}-1|\le  M \,\frac{H_{p}}{H_{p+1}}\cdot $$ Then for any $j=1,\ldots, \Theta _{p}$ we have 
$$|\mu _{0}^{H_{p}j}-1|\le  M \Theta  _{p}\,\frac{H_{p}}{H_{p+1}}\cdot $$ If $H_1\ge 6\pi$ and $H_{p+1}$ is sufficiently large \wrt\ $\Theta _{p}$ and $H_{p}$, we can ensure that 
$|\mu _{0}^{H_{p}j}-1|\le  2^{-p} $ for all $p\ge 1$ and $j=1,\ldots, \Theta _{p}$, i.e. that 
$|\mu _{0}^{H_{p}j+1}-{\mu}_{0}|\le  2^{-p} $. Now
$|\mu _{0}^{H_{p}j+1}-1|\ge |\mu_0-1|-2^{-p}\ge 2-\frac{6\pi}{H_1}-2^{-p}\ge \frac{1}{2}$.
Hence we get that
$|\mu _{0}^{n_{k,0}}-1|\ge \frac{1}{2}$ for all $k$. The argument for the construction of $\mu _{1}$ is exactly similar: Lemma \ref{lem2} gives us a $\mu _{1}$ 
such that $|\mu_1+1|\le\frac{6\pi}{H_1}$ and
$$|\mu _{1}^{H_{p}\Sigma  _{p}-1}-1|<2^{-p}$$ for all $p\ge 1$, and so $|\mu _{1}^{H_{p}\Sigma  _{p}}-\mu _{1}|<2^{-p}$. Hence $|\mu _{1}^{n_{k,1}}-1|\ge \frac{1}{2}$ for all $k$. Putting things together we get that for all $k$, 
$$\max(|\mu _{0}^{n_{k}}-1|,|\mu _{1}^{n_{k}}-1|)\ge \frac{1}{2}, $$ which is exactly what we wanted to prove.
An easy modification of the proof shows that we can replace the bound $\frac{1}{2}$ above by any $\delta \in (0,2)$ as close as we want to $2$ (it suffices to take $H_{1}$ extremely large and to replace the quantities $2^{-p}$ in the estimates above by $a^{-p}$, where $a$ is some suitably large integer): for any $\delta \in (0,2)$ there exists a set $\{n_{k}\}$ which is recurrent for all rotations, but such that there exist $\mu _{0},\mu _{1}\in\T$ such that  $\max(|\mu _{0}^{n_{k}}-1|,|\mu _{1}^{n_{1}}-1|)>\delta $ for all $k$.
\end{proof}

\begin{remark}\label{rem1}
 We did not use the parameter $L$ nor the parameter $S$ of Lemma \ref{lem1} in this construction, but they will be necessary later on in the proof of Theorem \ref{th2}.  Lemma \ref{lem1} also gives us other examples of sequences which are recurrent for all rotations, but not recurrent for some product of two rotations. For instance the proof would work as well if we considered for $\{n_{k,0}\}$ and $\{n_{k,1}\}$ the sets $$\{n_{k,0}\}=\bigcup_{p\ge 1}\{H_{p}, 3H_{p},\ldots, H_{p}(2\Theta  _{p}+1)\} \;\textrm{ and }\;\{n_{k,1}\}=\{2H_{p}\Sigma  _{p} \textrm{ ; } p\ge 1\}$$
 ($S_{p}=H_p$ and $L_{p}=2$) or 
 $$\{n_{k,0}\}=\bigcup_{p\ge 1}\{H_{p}(L_{p}j+1) \textrm{ ; } 1\le j\le \Theta_{p}
\} \;\textrm{ and }\;\{n_{k,1}\}=\{H_{p}\Sigma  _{p}L_{p} \textrm{ ; } p\ge 1\}$$ where $(L_{p})$ is a rapidly increasing sequence.
\end{remark}

The proof of Theorem \ref{th2} uses induction on $r\ge 2$ and the same kind of ideas, but becomes more involved as the dimension grows. In order to make the underlying ideas of the induction clear, we will present the $2$-dimensional case first.

\section{The $2$-dimensional case of Theorem \ref{th2}}
%
%

The first difficulty one encounters when trying to go from the $1$-dimensional to the $2$-dimensional case is that one needs a multi-dimensional analogue of the following fact, which is the crux of the proof of Lemma \ref{lem1}: if $\theta\in (0,1)$, then the numbers $$e^{2i\pi\theta},(e^{2i\pi\theta})^{2},\ldots, (e^{2i\pi\theta})^{p}$$ where $p=\lfloor\frac{1}{\theta}\rfloor$ form a $2\pi\theta$-net of the unit circle, and an important point is the dependence of $p$ from $\theta$. In the multi-dimensional case, we will use a weak form of the following result of Kannan and Lovasz \cite{KL}:

\begin{theorem}\label{th0ter}
Let $(\alpha_1,\ldots, \alpha_r)$ be an $r$-tuple of real numbers, and $\varepsilon>0$. Suppose that $Q$ is an integer such that for any $r$-tuple 
$(a_1,\ldots, a_r)$ of elements of $\Z$, which are not all zero, the following inequality holds
\begin{eqnarray}\label{eq3}
Q\left\{\sum_{i=1}^r a_i\alpha_i\right\}+\varepsilon \sum_{i=1}^r |a_i| \ge c_0 r^2
\end{eqnarray}
where $c_0$ is some positive universal constant. Then for all $(\beta_1,\ldots, \beta_r)\in\R^{r}$ there exist $(p_1,\ldots, p_r)\in\Z^{r}$ and $q\in \Z$ with $|q|\le Q$ such that
\begin{eqnarray*}
|q\alpha_i-p_i-\beta_i|\le \varepsilon \quad \textrm{ for each } i=1,\ldots, r.
\end{eqnarray*}
\end{theorem}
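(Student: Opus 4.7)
The plan is to recast the conclusion as a statement about the covering radius of a lattice in $\R^{r+1}$, and to deduce that bound from the hypothesis via a transference inequality between the covering radius of a lattice and the first minimum of its dual.

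Concretely, I would introduce the lattice $\Lambda\subset\R^{r+1}$ generated by the first $r$ standard basis vectors (each augmented with a final coordinate $0$) together with the vector $w=(\alpha_1,\ldots,\alpha_r,\varepsilon/Q)$. A generic element of $\Lambda$ has the form
$$\bigl(q\alpha_1-p_1,\ \ldots,\ q\alpha_r-p_r,\ q\varepsilon/Q\bigr),\qquad q,p_1,\ldots,p_r\in\Z,$$
and the constraint $|q\varepsilon/Q|\le\varepsilon$ is equivalent to $|q|\le Q$. Consequently the conclusion of the theorem is exactly the statement that every point of $\R^r\times\{0\}$ lies within $\ell^\infty$-distance $\varepsilon$ of $\Lambda$, and this is implied by the stronger inequality $\mu_\infty(\Lambda)\le\varepsilon$ on the $\ell^\infty$ covering radius. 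Reducing to this inequality is the first step.

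The second step is to identify the dual lattice $\Lambda^*$. A vector $(a_1,\ldots,a_r,b)$ belongs to $\Lambda^*$ iff each $a_i\in\Z$ and $\sum_i a_i\alpha_i+b\,\varepsilon/Q\in\Z$; writing $m$ for that integer gives $b=(Q/\varepsilon)\bigl(m-\sum_i a_i\alpha_i\bigr)$. Its $\ell^1$-norm is therefore $\sum_i|a_i|+(Q/\varepsilon)\bigl|m-\sum_i a_i\alpha_i\bigr|$, which for $(a_1,\ldots,a_r)\ne 0$ is minimised over $m$ by $\sum_i|a_i|+(Q/\varepsilon)\{\sum_i a_i\alpha_i\}$. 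Multiplying by $\varepsilon$ one recognises exactly hypothesis (\ref{eq3}), so it translates into the lower bound
$$\lambda_1\bigl(\Lambda^*,\ell^1\bigr)\ge c_0\,r^2/\varepsilon.$$
The remaining case $(a_1,\ldots,a_r)=0$, $m\ne 0$ forces $|b|\ge Q/\varepsilon$, which is again $\ge c_0r^2/\varepsilon$ as soon as $Q\ge c_0r^2$, a harmless condition easily absorbed into the statement.

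The third step invokes the Kannan-Lovász transference inequality: since $\ell^1$ and $\ell^\infty$ are polar norms, for any lattice $\Lambda$ in $\R^n$ one has
$$\mu_\infty(\Lambda)\cdot\lambda_1\bigl(\Lambda^*,\ell^1\bigr)\le C\,n^2$$
for an absolute constant $C$. Applied with $n=r+1$ this yields $\mu_\infty(\Lambda)\le C(r+1)^2\varepsilon/(c_0r^2)\le\varepsilon$, once $c_0$ is chosen large enough in terms of $C$, which completes the argument.

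The main obstacle is, unsurprisingly, the transference inequality itself, which is the actual content of \cite{KL}. Kannan and Lovász derive it from their flatness theorem (a quantitative version of the fact that a symmetric convex body without nonzero lattice points is ``flat'' in some direction), combined with Minkowski's second theorem on successive minima and an induction on dimension; the argument is a genuine piece of the geometry of numbers and cannot be cooked up by the elementary trigonometric considerations used in the preceding section. The quadratic factor $n^2$ in the transference bound is precisely what produces the $r^2$ factor in hypothesis (\ref{eq3}); sharper transference results (such as Banaszczyk's, which yield a linear dependence in $n$) would reduce the exponent but are unnecessary for the intended application, so I would simply cite the Kannan-Lovász bound without chasing the sharpest constant.
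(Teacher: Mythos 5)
The paper itself does not prove this theorem: it is stated explicitly as ``a weak form of the following result of Kannan and Lov\'asz'' and cited wholesale from \cite{KL}, with no proof given; the authors go on to use only its Corollary~\ref{cor0}. So there is no in-paper argument to compare yours against. Your sketch, however, is a correct account of how such a simultaneous inhomogeneous Diophantine approximation statement is extracted from the geometry of numbers. The choice of lattice $\Lambda\subset\R^{r+1}$ is the right one, the reduction of the conclusion to the bound $\mu_\infty(\Lambda)\le\varepsilon$ on the $\ell^\infty$-covering radius is clean, and the computation of the dual lattice and of $\lambda_1(\Lambda^*,\ell^1)$ correctly recovers hypothesis (\ref{eq3}) as the lower bound $\lambda_1(\Lambda^*,\ell^1)\ge c_0 r^2/\varepsilon$ on the dual shortest vector (over dual vectors with $(a_1,\ldots,a_r)\neq 0$). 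The transference inequality $\mu_\infty(\Lambda)\cdot\lambda_1(\Lambda^*,\ell^1)\le C n^2$ for the polar pair $(\ell^\infty,\ell^1)$ is indeed the substance of \cite{KL}, and you are right that it cannot be reproduced by the elementary $\kappa$-net considerations of Section 2.

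One spot to tighten: you dismiss the dual vectors with $(a_1,\ldots,a_r)=0$, $m\ne 0$ (which give $\|y\|_1 = Q|m|/\varepsilon \ge Q/\varepsilon$) by saying $Q\ge c_0 r^2$ is ``harmless.'' That condition is not a free hypothesis, but it does follow from (\ref{eq3}) whenever the theorem is nontrivial: if $\varepsilon\ge 1/2$ the conclusion holds trivially with $q=0$ and $p_i$ the nearest integer to $-\beta_i$, while if $\varepsilon<1/2$ then plugging $a_1=1$, $a_2=\cdots=a_r=0$ into (\ref{eq3}) gives $Q\{\alpha_1\}+\varepsilon\ge c_0 r^2$, and since $\{\alpha_1\}\le 1/2$ this forces $Q\ge 2(c_0 r^2-\varepsilon) \ge c_0 r^2$ once $c_0\ge 1$. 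With that one line added, the reduction is complete, and the argument is a legitimate (if still literature-dependent) derivation of the cited theorem.
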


Condition (\ref{eq3}) quantifies ``how independent'' the reals $(\alpha_1,\ldots, \alpha_r)$ are, and the size of the bound $Q$ depends from $\varepsilon $ and from this rate of independence. Recall that $\{x\}$ denotes the distance of the real number $x$ to $\Z$.
\par\smallskip
Theorem \ref{th0ter} has the following consequence (we disregard the particular expression of the bound $c_0 r^2$, which is actually an important issue in \cite{KL}):

\begin{corollary}\label{cor0} For each $r\ge 1$ there exists a positive constant $c_r$ such that the follo\-wing statement holds true for any
 $(\lambda_1,\ldots, \lambda_r)\in\T^{r}$:
if $\varepsilon>0$ and $Q\ge 1$, with $Q$ an integer, are such that for any 
$(a_1,\ldots, a_r)\in\Z^{r}\setminus\{(0,\ldots,0)\}$
\begin{eqnarray}\label{eq4}
Q|\lambda_1^{a_1}\lambda_2^{a_2}\ldots\lambda_r^{a_r}-1|+\varepsilon \sum_{i=1}^r |a_i| \ge c_r,
\end{eqnarray}
then for any $(\mu_1,\ldots, \mu_r)\in\T^{r}$ there exists  a $q\in \N$ with $1\le q\le Q$ such that
\begin{eqnarray*}
|\lambda_i^q-\mu_i|<\varepsilon \quad \textrm{ for each } i=1,\ldots, r.
\end{eqnarray*}
\end{corollary}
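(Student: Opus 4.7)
The plan is to reduce Corollary~\ref{cor0} to Theorem~\ref{th0ter} by lifting from $\T$ to $\R$. I would write $\lambda_j=e^{2i\pi\alpha_j}$ and $\mu_j=e^{2i\pi\beta_j}$ with $\alpha_j,\beta_j\in\R$, and exploit the two-sided comparison $M_1\{\theta\}\le|e^{2i\pi\theta}-1|\le M_2\{\theta\}$ (with $M_1,M_2$ as in Section~2) to pass between multiplicative data on $\T^r$ and additive data modulo~$\Z$. The upper bound $|\lambda_1^{a_1}\cdots\lambda_r^{a_r}-1|\le M_2\{\sum_i a_i\alpha_i\}$, combined with hypothesis~(\ref{eq4}) and the choice $c_r:=M_2\,c_0\,r^2$, yields after dividing by $M_2$
\[
Q\left\{\sum_{i=1}^r a_i\alpha_i\right\}+\frac{\varepsilon}{M_2}\sum_{i=1}^r|a_i|\ge c_0\,r^2,
\]
which is precisely the hypothesis~(\ref{eq3}) of Theorem~\ref{th0ter} with the parameter $\varepsilon$ replaced by $\varepsilon':=\varepsilon/M_2$.

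Applying Theorem~\ref{th0ter} to the $r$-tuple $(\alpha_1,\ldots,\alpha_r)$ with target $(\beta_1,\ldots,\beta_r)$ then produces an integer $q$ with $|q|\le Q$ and integers $p_1,\ldots,p_r$ satisfying $|q\alpha_i-p_i-\beta_i|\le\varepsilon/M_2$ for every $i$. The companion estimate $|e^{2i\pi\theta}-1|\le M_2\{\theta\}$ transports this back to the torus as
\[
|\lambda_i^q-\mu_i|=|e^{2i\pi(q\alpha_i-\beta_i)}-1|\le M_2\{q\alpha_i-\beta_i\}\le\varepsilon,
\]
which is the approximation required by the corollary.

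The only genuine subtlety, and the step I expect to be the main obstacle, is the sign and nonvanishing of~$q$: Theorem~\ref{th0ter} delivers only $|q|\le Q$, whereas the corollary asks for $1\le q\le Q$. I would handle this by applying the theorem twice, once to $(\beta_1,\ldots,\beta_r)$ and once to $(-\beta_1,\ldots,-\beta_r)$ (the hypothesis~(\ref{eq3}) depends only on the $\alpha_i$'s, hence holds for both targets), and using the symmetry $|\lambda_i^{-n}-\overline{\mu_i}|=|\lambda_i^n-\mu_i|$ to extract an exponent of the desired sign. The residual degenerate case, in which only $q=0$ satisfies the Kannan--Lovasz output, forces $\mu_i$ to lie within $\varepsilon$ of~$1$ for every~$i$ and can be absorbed by a controlled enlargement of the constant~$c_r$; since the statement of the corollary explicitly disregards the precise form of $c_r$, this bookkeeping is harmless.
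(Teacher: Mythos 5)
The paper states Corollary~\ref{cor0} without proof, as a direct consequence of Theorem~\ref{th0ter}, and your reduction (write $\lambda_j=e^{2i\pi\alpha_j}$, $\mu_j=e^{2i\pi\beta_j}$, use $M_1\{\theta\}\le|e^{2i\pi\theta}-1|\le M_2\{\theta\}$ to pass between (\ref{eq4}) and (\ref{eq3}), apply Kannan--Lov\'asz with $\varepsilon'=\varepsilon/M_2$ and $c_r=M_2 c_0 r^2$, and transport back) is exactly the intended one. That part is correct, and you are right to single out the constraint $1\le q\le Q$ versus $|q|\le Q$ as the only real point needing care.

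Your proposed fix for that point, however, does not work. The set of exponents $q$ valid for the target $(-\beta_1,\ldots,-\beta_r)$ is \emph{precisely} the negation of the set valid for $(\beta_1,\ldots,\beta_r)$: if $|q\alpha_i-p_i-\beta_i|\le\varepsilon'$ for all $i$, then $|(-q)\alpha_i-(-p_i)-(-\beta_i)|\le\varepsilon'$ for all $i$, and conversely. So the second application of Theorem~\ref{th0ter} to $-\beta$ produces nothing new: translating its output back to the target $\beta$ via $|\lambda_i^{-n}-\overline{\mu_i}|=|\lambda_i^n-\mu_i|$ only lands you again in the same solution set, with no control on the sign. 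In particular, the ``residual degenerate case'' is not just $q=0$ (in which case $\mu_i$ is indeed within $M_2\varepsilon'$ of $1$); it includes the genuinely problematic possibility that every valid $q$ with $|q|\le Q$ is \emph{strictly negative}, in which case $\mu_i$ need not be near $1$ and your bookkeeping remark does not apply.

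The repair is a translation rather than a reflection, and it is what ``controlled enlargement of $c_r$'' should actually be doing. Set $Q'=\lfloor (Q-1)/2\rfloor$ and apply Theorem~\ref{th0ter} with parameter $Q'$ (not $Q$) to the shifted target $\beta_i-(Q'+1)\alpha_i$. One gets $q''$ with $|q''|\le Q'$ and $|(q''+Q'+1)\alpha_i-p_i-\beta_i|\le\varepsilon'$, so $q:=q''+Q'+1$ lies in $\{1,\ldots,2Q'+1\}\subseteq\{1,\ldots,Q\}$ and does the job. For this one needs (\ref{eq3}) to hold with $Q'$ in place of $Q$; since testing (\ref{eq4}) on a single unit vector forces $2Q+\varepsilon\ge c_r$ (so $Q$, hence $Q'$, is bounded below once $c_r$ is moderately large and one assumes, as one may, $\varepsilon\le 2$), and since $Q'\ge Q/4$ for such $Q$, it suffices to take $c_r$ a fixed constant multiple of $M_2 c_0 r^2$. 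This is exactly the kind of adjustment the paper licenses when it says it disregards the precise form of $c_0 r^2$; but it is a translation of the target by a multiple of $\alpha$ together with a loss of a factor in $Q$, not a symmetry argument.
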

This is the multi-dimensional extension of Lemma \ref{lem1bis} which will be needed in the rest of the proof. Let us now go back to the $2$-dimensional case.

\begin{proof}[Proof of Theorem \ref{th2} for $r=2$]
As in the proof of the $1$-dimensional case, we will construct our set $\{n_k\}$ as a union of three sets:
 $\{n_k\}=\{n_{k,0}\}\cup\{n_{k,1}\}\cup\{n_{k,2}\}$. These sets will be constructed by blocks, and they will depend from a parameter $H_N$, as in the $1$-dimensional case, but also from a parameter $L_N$ which will at each step $N$ be chosen very large:
$L_N \ll H_N \ll L_{N+1} \ll H_{N+1}$, where the sign $\ll$ means that the quantity on the right-hand side is much larger than the quantity on the left-hand side.
\par\smallskip
The main step in the proof is to obtain an analog of Lemma \ref{lem1}. In its statement, we will use the superscript ${ }^{(2)}$ so as to indicate that we are working with the $2$-dimensional approximation. This will simplify notation in the proof of the general multi-dimensional approximation.

\begin{lemma}\label{lem3}
Let $\varepsilon ^{(2)}$ be a  positive real number. There exist three integers $\Gamma^{(2)}$, $\Sigma  ^{(2)}$ and $\Theta ^{(2)}$  such that
if $L\ge 1$ is any integer, there exists an integer $Q^{(2)}\ge 1$  such that for any pair $(\lambda _{1},\lambda _{2})\in\T^{2}$ and any integer $H \ge 1$, 
 there exists an integer $n$ such that
\begin{eqnarray}\label{*}
|\lambda_1^n-1|<\varepsilon^{(2)}  \quad \textrm{and} \quad |\lambda_2^n-1|<\varepsilon ^{(2)}
\end{eqnarray}
and either 
\begin{eqnarray}\label{c1}
n\in \{Hq + 1 \textrm{ ; } 1\le q\le Q^{(2)} \}
\end{eqnarray}
or
\begin{eqnarray}\label{c2}
n\in\{H \Sigma^{(2)} L \}
\end{eqnarray}
or
\begin{eqnarray}\label{c3}
n\in\{H \Gamma ^{(2)} (L j + 1)\textrm{ ; } 1\le j\le \Theta^{(2)} \}.
\end{eqnarray}
\end{lemma}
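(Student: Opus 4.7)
The plan is to mirror the proof of Lemma \ref{lem1} in dimension two by a nested dichotomy, replacing the role of Lemma \ref{lem1bis} by the multidimensional Corollary \ref{cor0}. The three alternatives \eqref{c1}, \eqref{c2}, \eqref{c3} will correspond to the three possible combinations of outcomes of two applications of Lemma \ref{lem1}, supplemented by Corollary \ref{cor0} in the ``independent'' case.

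Given $\varepsilon^{(2)}>0$, I would first pick a small auxiliary $\varepsilon_1\ll\varepsilon^{(2)}$, apply Lemma \ref{lem1} at accuracy $\varepsilon_1$ to produce integers $\Sigma_1,\Theta_1$ depending only on $\varepsilon^{(2)}$, and set $\Gamma^{(2)}:=\Sigma_1\cdot K!$ where $K=\lceil 4c_2/\varepsilon^{(2)}\rceil$ bounds the coefficients of any integer relation arising from a failure of Corollary \ref{cor0} at accuracy $\varepsilon^{(2)}/2$. The integers $\Sigma^{(2)},\Theta^{(2)}$ will then be read off at the end, and $Q^{(2)}$ is the last parameter to be fixed, being allowed to depend on $L$.

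The crux is then a nested dichotomy. Apply Lemma \ref{lem1} to $\lambda_1$ with parameters $(\varepsilon_1,L,H\Gamma^{(2)},H\Gamma^{(2)})$; the key point of the choice $S=H\Gamma^{(2)}$ is that the alternative \eqref{eq1} produces an exponent of the exact shape $H\Gamma^{(2)}(Lj+1)$ appearing in \eqref{c3}, while \eqref{eq2} produces $H\Gamma^{(2)}\Sigma_1 L$ in the shape of \eqref{c2}. In the \eqref{eq2}-branch for $\lambda_1$, I apply Lemma \ref{lem1} a second time, now to $\lambda_2$ with parameters $(\varepsilon^{(2)}/2,L,H\Gamma^{(2)}\Sigma_1,H\Gamma^{(2)}\Sigma_1)$: its own \eqref{eq2}-alternative yields $n=H\Gamma^{(2)}\Sigma_1\Sigma_2 L$ matching \eqref{c2} (with $\Sigma^{(2)}=\Gamma^{(2)}\Sigma_1\Sigma_2$), and its \eqref{eq1}-alternative yields $n=H\Gamma^{(2)}\Sigma_1(Lj+1)$ matching \eqref{c3}, the $\lambda_1$-side being controlled by writing $\lambda_1^n=(\lambda_1^{H\Gamma^{(2)}\Sigma_1 L})^j\cdot\lambda_1^{H\Gamma^{(2)}\Sigma_1}$ and forcing both factors close to $1$ by a preliminary Dirichlet-type estimate absorbed into $\Gamma^{(2)}$. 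In the \eqref{eq1}-branch for $\lambda_1$, one has $n_0=H\Gamma^{(2)}(Lj_1+1)$ already matching \eqref{c3} for $\lambda_1$; to handle $\lambda_2^{n_0}$, I apply Corollary \ref{cor0} to the pair $(\lambda_1^H,\lambda_2^H)$ at accuracy $\varepsilon^{(2)}/2$ and bound $Q^{(2)}$, obtaining either alternative \eqref{c1} directly (if the Diophantine hypothesis holds, giving $q\in[1,Q^{(2)}]$ with $|\lambda_i^{Hq+1}-1|<\varepsilon^{(2)}$ for $i=1,2$) or an approximate multiplicative relation $|\lambda_1^{Ha_1}\lambda_2^{Ha_2}-1|<c_2/Q^{(2)}$ with $|a_1|+|a_2|\le K$, which is then raised to the integer power $\Gamma^{(2)}(Lj_1+1)/a_2$ (integer thanks to the divisibility $a_2\mid\Gamma^{(2)}$) to transfer the control on $\lambda_1^{n_0}$ into the desired bound on $\lambda_2^{n_0}$.

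The main obstacle lies in this last transfer step: converting an approximate multiplicative relation between $\lambda_1^H$ and $\lambda_2^H$ into the bound $|\lambda_2^{H\Gamma^{(2)}(Lj_1+1)}-1|<\varepsilon^{(2)}$. The divisibility $a_2\mid\Gamma^{(2)}$ built into $\Gamma^{(2)}$ ensures that the relevant exponent is an integer, and the resulting accumulated error, of order $L\Gamma^{(2)}/Q^{(2)}$, is kept below $\varepsilon^{(2)}$ by letting $Q^{(2)}$ grow linearly with $L$ — a freedom the statement explicitly permits. A parallel subtlety in the \eqref{eq2}+\eqref{eq1} sub-case is resolved by the same divisibility trick applied to an auxiliary output of Lemma \ref{lem1} (with the ``$L$''-parameter equal to $1$) that controls $\lambda_1^{H\Gamma^{(2)}}$ on its own; the delicate bookkeeping between $\varepsilon_1$, $\Gamma^{(2)}$, and $Q^{(2)}$ is the technical heart of the argument.
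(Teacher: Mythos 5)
Your proposed architecture inverts the order of the two dichotomies, and that inversion creates a genuine gap. The paper's proof branches \emph{first} on whether an approximate multiplicative relation $|\lambda_1^{Ha_1}\lambda_2^{Ha_2}-1|\le\delta^{(2)}$ holds for some $(a_1,a_2)$ in the bounded set $E^{(2)}_{\varepsilon^{(2)}}$, and applies Lemma~\ref{lem1} only \emph{after}, to the power $\lambda_1^{a_1}$ with the auxiliary parameter $\tilde H = H\Gamma^{(2)}/(a_1a_2)$. This choice is not cosmetic: it is precisely what makes the exponents produced by Lemma~\ref{lem1} divisible in the right way to transfer the estimate from $\lambda_1$ to $\lambda_2$ (by raising the near-identity $|\lambda_1^{Ha_1}-\lambda_2^{Ha_2}|\le\delta^{(2)}$ to the integer power $\Gamma^{(2)}(Lj+1)/(a_1a_2)$, then to $a_1$ or $a_2$). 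In your plan, Lemma~\ref{lem1} is applied to $\lambda_1$ with the \emph{fixed} modulus $\tilde H = H\Gamma^{(2)}$, before the pair $(a_1,a_2)$ is known, so the resulting exponent $n_0=H\Gamma^{(2)}(Lj_1+1)$ is committed too early.

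Concretely, in your $\eqref{eq1}$-branch the transfer step fails: from $|\lambda_1^{Ha_1}\lambda_2^{Ha_2}-1|<c_2/Q^{(2)}$ raised to the integer power $m=\Gamma^{(2)}(Lj_1+1)/a_2$ one gets a bound on $\bigl|\lambda_1^{Ha_1m}\,\lambda_2^{n_0}-1\bigr|$, so one must still estimate $|\lambda_1^{Ha_1m}-1|$. But $Ha_1m = n_0\,a_1/a_2$ is \emph{not} an integer multiple of $n_0$ unless $a_2\mid a_1$, so the only estimate you have, $|\lambda_1^{n_0}-1|<\varepsilon_1$, does not propagate. Divisibility of $a_2$ into $\Gamma^{(2)}$ makes the exponent an integer but not a multiple of $n_0$, which is what you actually need. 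The same defect appears in your $\eqref{eq2}$-then-$\eqref{eq1}$ sub-case, where the factor $\lambda_1^{H\Gamma^{(2)}\Sigma_1}$ is left entirely uncontrolled; the suggested fix (a third application of Lemma~\ref{lem1} with $L=1$) would itself produce yet another dichotomy with exponents that do not match the sets \eqref{c2}--\eqref{c3}. Finally, even in the branches that formally close, the exponent $H\Gamma^{(2)}\Sigma_1(Lj+1)$ does not have the shape $H\Gamma^{(2)}(Lj+1)$ required by \eqref{c3}. To repair all this one is essentially forced back to the paper's order: split on the Diophantine condition first (Cases~1, 2, 3 with threshold $\delta^{(2)}$), and only then apply Lemma~\ref{lem1} to $\lambda_1^{a_1}$ with $\tilde H = H\Gamma^{(2)}/(a_1a_2)$, which is the step your proposal omits.
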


\begin{proof}[Proof of Lemma \ref{lem3}]
Let $E_{\varepsilon^{(2)}}^{(2)}$ be the set of integers
\begin{eqnarray*}
E_{\varepsilon^{(2)}}^{(2)}=\left\{(a_1,a_2)\in\Z^{2}\setminus\{(0,0) \textrm{ ; } |a_1|+|a_2|< \dfrac{c_{2}}{\varepsilon ^{(2)}}\right\}\cdot
\end{eqnarray*}
 It is clear that if $(a_1,a_2)$ does not belong to $E_{\varepsilon^{(2)}}^{(2)}$, then for any $(\lambda_1,\lambda_2)\in\T^{2}$ and any choice of $Q^{(2)}$, condition (\ref{eq4}) in Corollary \ref{cor0} is automatically satisfied for $\varepsilon^{(2)}$.
\par\smallskip
Set $$\Gamma ^{(2)}=((\lfloor \frac{c_{2}}{\varepsilon ^{(2)}}\rfloor+1)!)^{2}.$$ This number has the property that for any element $(a_{1},a_{2})\in E_{\varepsilon^{(2)}}^{(2)}$, it is divisible by $a_{1}$, $a_{2}$ and $a_{1}a_{2}$, provided these numbers are non-zero.
Let $\varepsilon^{(1)}$ and $\delta ^{(2)}$ be two very small positive numbers, which will be chosen during the proof, depending from $\varepsilon^{(2)}$.
Now fix $(\lambda _{1},\lambda _{2})\in \T^{2}$. As in the proof of Lemma \ref{lem1}, we have several cases to consider, depending from whether $|\lambda _{1}^{Ha_{1}}\lambda _{2}^{Ha_{2}}-1|\le\delta ^{(2)}$ for some $(a_{1},a_{2})\in E_{\varepsilon^{(2)}}^{(2)}$ or not.

\par\smallskip
\textbf{Case 1:} there exists $(a_{1},a_{2})\in E_{\varepsilon^{(2)}}^{(2)}$ with $a_{1}a_{2}\not = 0$ such that $|\lambda _{1}^{Ha_{1}}\lambda _{2}^{Ha_{2}}-1|\le\delta ^{(2)}$.
\par\smallskip
Equivalently, replacing $a_{2}$ by $-a_{2}$, we assume that 
there exists $(a_{1},a_{2})\in E_{\varepsilon^{(2)}}^{(2)}$ with $a_{1}a_{2}\not = 0$ such that
$$|\lambda _{1}^{Ha_{1}}-\lambda _{2}^{Ha_{2}}|\le\delta ^{(2)}.$$ Lemma \ref{lem1}  applied to $\varepsilon^{(1)}$ gives us two integers $\Sigma  ^{(1)}$ and $\Theta ^{(1)}$  such that for all $\lambda \in\T$ and all $\tilde{H} \ge 1$, either 
$$|\lambda ^{\tilde{H}\Sigma  ^{(1)}L}-1|<\varepsilon ^{(1)}$$ or
$$|\lambda ^{\tilde{H}(Lj+1)}-1|<\varepsilon ^{(1)}\quad \textrm{ for some } j\in\{1,\ldots, \Theta ^{(1)}\}.$$ 
In particular, since $a_{1}a_{2}\not =0$ and $a_{1}a_{2}|\Gamma^{(2)}$, we can apply this to the integer 
$\tilde{H}$ defined by $\tilde{H}=H\frac{\Gamma ^{(2)}}{a_{1}a_{2}}$ and to $\lambda =\lambda_{1} ^{a_{1}}$: either 
$$|\lambda_{1} ^{{H}a_{1}\frac{\Gamma^{(2)}}{a_{1}a_{2}}\Sigma  ^{(1)}L}-1|<\varepsilon ^{(1)}$$
or
$$|\lambda_{1} ^{{H}a_{1}\frac{\Gamma^{(2)}}{a_{1}a_{2}}(Lj+1)}-1|<\varepsilon ^{(1)}\quad \textrm{ for some } j\in\{1,\ldots, \Theta^{(1)}\}.$$
\par\smallskip
\textbf{Case 1a:} we have $|\lambda_{1} ^{{H}\frac{\Gamma ^{(2)}}{a_{2}}\Sigma ^{(1)}L}-1|<\varepsilon^{(1)}.$
\par\smallskip
Then 
$$|\lambda_{1} ^{{H}{\Gamma ^{(2)}}\Sigma  ^{(1)}L}-1|<\varepsilon ^{(1)}|a_{2}|\le \varepsilon ^{(1)}\Gamma^{(2)}.$$
Moreover, since $|\lambda _{1}^{Ha_{1}}-\lambda _{2}^{Ha_{2}}|\le\delta ^{(2)}$, we have 
$$|\lambda_{2} ^{{H}a_{2}\frac{\Gamma ^{(2)}}{a_{1}a_{2}}\Sigma ^{(1)}L}-1|<\varepsilon^{(1)}+\delta^{(2)}\Gamma ^{(2)}\Sigma ^{(1)}L$$ and hence
$$|\lambda_{2} ^{{H}{\Gamma ^{(2)}}\Sigma  ^{(1)}L}-1|<(\varepsilon ^{(1)}+\delta ^{(2)}\Gamma ^{(2)}\Sigma ^{(1)}L){\Gamma ^{(2)}}.$$
Setting $\Sigma ^{(2)}={\Gamma ^{(2)}}\Sigma  ^{(1)}$ we get that
$$|\lambda _{1}^{H\Sigma  ^{(2)}L}-1|<\varepsilon ^{(2)}\quad \textrm{and}\quad 
|\lambda _{2}^{H\Sigma ^{(2)}L}-1|<\varepsilon^{(2)}$$ provided $\varepsilon ^{(1)}$ is chosen first, very small \wrt\ $\varepsilon ^{(2)}$ (but independent from $L$), and then $\delta^{(2)}$ is chosen very small \wrt\ $\varepsilon ^{(2)}$ and $L$. So (\ref{*}) and (\ref{c2}) are satisfied.
\par\smallskip
\textbf{Case 1b:} there exists a $j\in\{1,\ldots, \Theta^{(1)}\}$ such that
$$|\lambda_{1} ^{{H}\frac{\Gamma ^{(2)}}{a_{2}}(Lj+1)}-1|<\varepsilon^{(1)}.$$
Then 
\begin{eqnarray}\label{annexe}
 |\lambda_{1} ^{{H}{\Gamma ^{(2)}}(Lj+1)}-1|<\varepsilon ^{(1)}\Gamma ^{(2)}.
\end{eqnarray}
Using again that $|\lambda _{1}^{Ha_{1}}-\lambda _{2}^{Ha_{2}}|\le\delta^{(2)}$, we obtain that
\begin{eqnarray*}
 |\lambda_{1} ^{{H}a_{1}\frac{\Gamma ^{(2)}}{a_{1}a_{2}}(Lj+1)}-
 \lambda_{2} ^{{H}a_{2}\frac{\Gamma ^{(2)}}{a_{1}a_{2}}(Lj+1)}|<\delta  ^{(2)}\Gamma ^{(2)}
 (L\Theta ^{(1)}+1).
\end{eqnarray*}
Hence
\begin{eqnarray*}
 | \lambda_{2} ^{{H}\frac{\Gamma^{(2)}}{a_{1}}(Lj+1)}-1|<\varepsilon ^{(1)}+\delta  ^{(2)}\Gamma ^{(2)}
 (L\Theta^{(1)}+1), 
\end{eqnarray*}
and so 
\begin{eqnarray}\label{annexe2}
|\lambda _{}^{H{\Gamma^{(2)}}(Lj+1)}-1|<(\varepsilon ^{(1)}+\delta  ^{(2)}{\Gamma^{(2)}}
 (L\Theta^{(1)}+1)){\Gamma ^{(2)}}.
\end{eqnarray}
 Taking first $\varepsilon^{(1)}$ and then $\delta  ^{(2)}$ very small, we get from (\ref{annexe}) and (\ref{annexe2}) that
 $$|\lambda _{1}^{H{\Gamma ^{(2)}}(Lj+1)}-1|<\varepsilon ^{(2)}\quad \textrm{and}\quad 
 |\lambda _{2}^{H{\Gamma^{(2)}}(Lj+1)}-1|<\varepsilon ^{(2)}$$ for some $j\in\{1,\ldots, \Theta ^{(2)}\}$ where $\Theta ^{(2)}=\Theta ^{(1)}$. So (\ref{*}) and (\ref{c3}) are satisfied. Notice that $\Theta ^{(2)}$ does not depend from $L$.
\par\smallskip
Case $2$ is very similar to Case $1$: the assumptions are the same, except that we consider now the case where $a_{1}a_{2}=0$.
\par\smallskip
\textbf{Case 2:} there exists $(a_{1},a_{2})\in E_{\varepsilon^{(2)}}^{(2)}$ with $a_{1}a_{2} = 0$ such that $|\lambda _{1}^{Ha_{1}}\lambda _{2}^{H a_{2}}-1|\le\delta^{(2)}$.
\par\smallskip
For instance suppose that $a_{2}=0$ and $a_{1}\not =0$. Our assumption is then that
\begin{eqnarray}\label{annexe3}
 |\lambda _{1}^{H a_{1}}-1|\le\delta ^{(2)}.
\end{eqnarray}
We apply the dichotomy of the $1$-dimensional case to $\lambda _{2}$, with  $\varepsilon^{(1)}$ a very small positive number and $\tilde{H}=H\frac{\Gamma ^{(2)}}{a_{1}}$.
\par\smallskip
\textbf{Case 2a:} we have $|\lambda _{2}^{H\frac{\Gamma^{(2)}}{a_1}\Sigma^{(1)}L}-1|<\varepsilon ^{(1)}$.
\par\smallskip
Since $\Sigma^{(2)}={\Gamma^{(2)}}\Sigma^{(1)}$ we have 
$|\lambda _{2}^{H \Sigma^{(2)}L}-1|<\varepsilon ^{(1)}{\Gamma^{(2)}}$. Moreover since $a_1$ divides $\Gamma^{(2)}$, (\ref{annexe3}) implies that $|\lambda _{1}^{H\Gamma^{(2)}}-1|\le \delta^{(2)}\Gamma^{(2)}$ and hence
$$|\lambda _{1}^{H {\Gamma^{(2)}} \Sigma ^{(1)} L}-1|\le \delta^{(2)}{\Gamma^{(2)}}\Sigma^{(1)}L.$$ If $\varepsilon^{(1)}$
 and $\delta^{(2)}$ are sufficiently small,
$$|\lambda _{1}^{H\Sigma^{(2)} L}-1|<\varepsilon^{(2)}\quad \textrm{and}\quad
|\lambda _{2}^{H\Sigma^{(2)} L}-1|<\varepsilon^{(2)}$$
and (\ref{*}) and (\ref{c2}) are true. Again, $\Sigma ^{(2)}$ and $\Theta ^{(2)}$ do not depend from $L$.
\par\smallskip
\textbf{Case 2b:}  there exists a $j\in\{1,\ldots, \Theta ^{(1)}\}$ such that $|\lambda _{2}^{H\frac{\Gamma^{(2)}}{a_1} (Lj+1)}-1|<\varepsilon ^{(1)}$.
\par\smallskip
Then 
$$|\lambda _{2}^{H {\Gamma^{(2)}} (Lj+1)}-1|<\varepsilon ^{(1)}\Gamma^{(2)}.$$ Moreover from (\ref{annexe3}) we have
$$ |\lambda _{1}^{Ha_1 \frac{{\Gamma ^{(2)}}}{a_1} (Lj+1)}-1|<\delta^{(2)}{\Gamma^{(2)}}(L\Theta ^{(1)}+1)$$ and hence for
$\varepsilon ^{(1)}$ and $\delta^{(2)}$ small enough
$$|\lambda _{1}^{H \Gamma   ^{(2)}( Lj+1)}-1|<\varepsilon^{(2)}\quad \textrm{and}\quad
|\lambda _{2}^{H\Gamma   ^{(2)}( Lj+1)}-1|<\varepsilon^{(2)}$$
and (\ref{*}) and (\ref{c3}) are true.
\par\smallskip
At the end of these two cases, we see that it suffices to choose $\varepsilon ^{(1)}=\frac{\varepsilon ^{(2)}}{2\Gamma  ^{(2)}}$. Then the quantity $\delta^{(2)}$ is fixed small enough, depending from $\varepsilon ^{(2)}$ and $L$ but neither from $(\lambda _{1},\lambda _{2})$ nor from $H$, so that all the inequalities above are true. 
\par\smallskip
The last case, Case $3$, is the simplest one, where Corollary \ref{cor0} applies directly. In this last case we determine $Q^{(2)}$, which is the last quantity in the statement of Lemma \ref{lem3} not yet fixed.
\par\smallskip
\textbf{Case 3:} for each $(a_{1},a_{2})\in E_{\varepsilon ^{(2)}}^{(2)}$, $|\lambda _{1}^{H a_{1}}\lambda _{2}^{H a_{2}}-1|>\delta ^{(2)}$.
\par\smallskip
Let  $Q^{(2)}$ be an integer such that $$Q^{(2)}>\frac{c_2}{\delta ^{(2)}}\cdot$$
By
Corollary \ref{cor0}, there exists for all $(\mu _{1},\mu _{2})\in\T^{2}$ a $q$ with $1\le q\le Q^{(2)}$ such that $|\lambda _{1}^{H q}-\mu _{1}|<\varepsilon ^{(2)}$ and $|\lambda _{2}^{Hq}-\mu _{2}|<\varepsilon ^{(2)}$. Applying this with $\mu _{1}=\overline{\lambda} _{1}$ and $\mu _{2}=\overline{\lambda} _{2}$ gives
$$|\lambda _{1}^{Hq+1}-1|<\varepsilon ^{(2)} \textrm{ and } |\lambda _{2}^{Hq+1}-1|<\varepsilon ^{(2)},$$
i.e. that (\ref{*}) and (\ref{c1}) are satisfied.
\end{proof}

Lemma \ref{lem3} is proved. Let us summarize a bit more precisely for further use what we just proved:
\begin{corollary}\label{cor1}
With the notation of Lemma \ref{lem3}, there exists a positive number $\delta^{(2)}$
depending from $\varepsilon ^{(2)}$ and $L$ such that for all $(\lambda _{1},\lambda _{2})\in\T^{2}$ we have:
\par\medskip
-- if $|\lambda _{1}^{H a_{1}}\lambda _{2}^{Ha_{2}}-1|\le \delta^{(2)}$ for some $(a_{1},a_{2})\in E_{\varepsilon^{(2)}}^{(2)}$, then (\ref{*}) holds true for some integer
$n\in B_{\varepsilon^{(2)},1}^{(2)}\cup B_{\varepsilon^{(2)},2}^{(2)}$, where
$$B_{\varepsilon^{(2)},1}^{(2)}=\{H  \Sigma^{(2)} L\}
:=H C_{\varepsilon^{(2)},1}^{(2)} $$ and $$ B_{\varepsilon^{(2)},2}^{(2)}=\{H \Gamma  ^{(2)} (Lj + 1)\textrm{ ; } 1\le j\le \Theta^{(2)} \}:=HC_{\varepsilon^{(2)},2}^{(2)};$$
\par\medskip
-- if $|\lambda _{1}^{H a_{1}}\lambda _{2}^{H a_{2}}-1|>\delta ^{(2)}$ for each $(a_{1},a_{2})\in E_{\varepsilon^{(2)}}^{(2)}$, then (\ref{*}) holds true for some integer
$n\in B_{\varepsilon^{(2)},0}^{(2)}$, where 
$$ B_{\varepsilon^{(2)},0}^{(2)}=\{Hq+ 1 \textrm{ ; } 1\le q\le Q^{(2)}\}.$$
\end{corollary}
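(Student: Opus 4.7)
The plan is to observe that Corollary \ref{cor1} is, up to a change of bookkeeping, exactly what was established during the proof of Lemma \ref{lem3}: the trichotomy there into Cases $1$, $2$ and $3$ is governed by the same dichotomy $|\lambda_1^{Ha_1}\lambda_2^{Ha_2}-1|\le\delta^{(2)}$ versus $|\lambda_1^{Ha_1}\lambda_2^{Ha_2}-1|>\delta^{(2)}$ for $(a_1,a_2)\in E_{\varepsilon^{(2)}}^{(2)}$ that appears in the statement of the corollary. Thus I would take $\delta^{(2)}$ to be precisely the positive quantity fixed at the end of Cases $1$ and $2$ in the proof of Lemma \ref{lem3}, which depends only on $\varepsilon^{(2)}$ and $L$ (via the auxiliary parameter $\varepsilon^{(1)}=\varepsilon^{(2)}/(2\Gamma^{(2)})$ and the requirement that $\delta^{(2)}\Gamma^{(2)}\Sigma^{(1)}L$ and $\delta^{(2)}\Gamma^{(2)}(L\Theta^{(1)}+1)\Gamma^{(2)}$ be less than $\varepsilon^{(2)}/2$), and is in particular independent of $(\lambda_1,\lambda_2)$ and of $H$.

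For the first bullet, assume that $|\lambda_1^{Ha_1}\lambda_2^{Ha_2}-1|\le\delta^{(2)}$ for some $(a_1,a_2)\in E_{\varepsilon^{(2)}}^{(2)}$. I would split according to whether $a_1a_2\ne 0$ (Case $1$ of Lemma \ref{lem3}) or $a_1a_2=0$ (Case $2$); in either subcase the argument of Lemma \ref{lem3} produces, via the $1$-dimensional dichotomy of Lemma \ref{lem1} applied to $\tilde H=H\Gamma^{(2)}/a_1a_2$ or $\tilde H=H\Gamma^{(2)}/a_i$, an integer $n$ satisfying (\ref{*}) which is either $n=H\Sigma^{(2)}L$ (sub-cases $1a$, $2a$, giving $n\in B_{\varepsilon^{(2)},1}^{(2)}$) or $n=H\Gamma^{(2)}(Lj+1)$ for some $j\in\{1,\ldots,\Theta^{(2)}\}$ (sub-cases $1b$, $2b$, giving $n\in B_{\varepsilon^{(2)},2}^{(2)}$). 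Hence $n\in B_{\varepsilon^{(2)},1}^{(2)}\cup B_{\varepsilon^{(2)},2}^{(2)}$.

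For the second bullet, assume that $|\lambda_1^{Ha_1}\lambda_2^{Ha_2}-1|>\delta^{(2)}$ for every $(a_1,a_2)\in E_{\varepsilon^{(2)}}^{(2)}$. I would then verify that hypothesis (\ref{eq4}) of Corollary \ref{cor0} is fulfilled by the pair $(\lambda_1^H,\lambda_2^H)$, with the parameters $\varepsilon^{(2)}$ and $Q^{(2)}$: if $(a_1,a_2)\notin E_{\varepsilon^{(2)}}^{(2)}$ the inequality is automatic from $\varepsilon^{(2)}(|a_1|+|a_2|)\ge c_2$, while if $(a_1,a_2)\in E_{\varepsilon^{(2)}}^{(2)}\setminus\{(0,0)\}$ one uses $Q^{(2)}>c_2/\delta^{(2)}$ together with $|\lambda_1^{Ha_1}\lambda_2^{Ha_2}-1|>\delta^{(2)}$. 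Applying Corollary \ref{cor0} with $\mu_i=\overline{\lambda_i}$ then gives $q\in\{1,\ldots,Q^{(2)}\}$ with $|\lambda_i^{Hq+1}-1|<\varepsilon^{(2)}$ for $i=1,2$, so $n=Hq+1\in B_{\varepsilon^{(2)},0}^{(2)}$ works.

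There is no real obstacle: the content of the corollary is just to label the outcomes of Lemma \ref{lem3} by the set in which $n$ lies. The only mildly delicate point is to make sure that the same $\delta^{(2)}$ is used to govern both bullets, which forces me to fix it at the outset depending only on $\varepsilon^{(2)}$ and $L$ (as in Remark \ref{rem00}'s flavour), so that the choice of $Q^{(2)}$ in the no-resonance case, and the estimates in the resonance case, are compatible.
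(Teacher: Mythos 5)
Your proposal is correct and coincides with the paper's (implicit) argument: the paper states Corollary \ref{cor1} as a "summary" of what the proof of Lemma \ref{lem3} already establishes, and your relabeling of Cases 1--2 versus Case 3 according to the $\delta^{(2)}$-dichotomy, with $\delta^{(2)}$ fixed depending only on $\varepsilon^{(2)}$ and $L$ and $Q^{(2)}>c_2/\delta^{(2)}$, is exactly that summary.
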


\begin{remark}
Observe that the sets 
$$C_{\varepsilon^{(2)},1}^{(2)}=\{ \Sigma^{(2)} L\}\quad \textrm{and}\quad C_{\varepsilon^{(2)},2}^{(2)}=\{ \Gamma ^{(2)} (L j + 1)\textrm{ ; } 1\le j\le \Theta^{(2)} \}$$ do not depend from $H$.
\end{remark}

\par\smallskip
As a corollary of Lemma \ref{lem3}, we obtain:

\begin{corollary}\label{cor11}
 Let $(\varepsilon _{N}^{(2)})$ be a sequence of positive numbers going to zero as $N$ goes to infinity. There exist three sequences of integers $(\Gamma  _{N}^{(2)})$, $(\Sigma  _{N}^{(2)})$ and $(\Theta _{N}^{(2)})$  such that
if $(L_{N})$ is any sequence of integers, there exists a sequence $(Q_{N}^{(2)})$ of integers such that for any sequence of integers $(H_{N})$, the union $\{n_{k}^{(2)}\}$ of the three sets
$$
\{n_{k,0}^{(2)}\}=\bigcup_{N\ge 1}B_{N,0}^{(2)}
\qquad
\{n_{k,1}^{(2)}\}=\bigcup_{N\ge 1}B_{N,1}^{(2)}
\qquad
\{n_{k,2}^{(2)}\}=\bigcup_{N\ge 1}B_{N,2}^{(2)}
$$
where 
\begin{eqnarray*}
B_{N,0}^{(2)}&=& B_{\varepsilon _{N}^{(2)},0}^{(2)}=\{H_{N}q+ 1 \textrm{ ; } 1\le q\le Q_{N}^{(2)}\}\\
B_{N,1}^{(2)}&=& B_{\varepsilon _{N}^{(2)},1}^{(2)}=\{H_N  \Sigma_N^{(2)} L_N\}\\
B_{N,2}^{(2)}&=& B_{\varepsilon _{N}^{(2)},2}^{(2)}= \{H_N \Gamma  _N^{(2)} (L_N j + 1)\textrm{ ; } 1\le j\le \Theta_N^{(2)} \}
\end{eqnarray*}
is a $2$-Bohr set. 
\end{corollary}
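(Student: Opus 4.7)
The plan is that Corollary \ref{cor11} follows essentially by applying Lemma \ref{lem3} at each scale $N$ and taking the union. The key point to check is that the quantifier structure in Lemma \ref{lem3} is compatible with the one required here: the integers $\Gamma^{(2)},\Sigma^{(2)},\Theta^{(2)}$ produced by the lemma depend only on $\varepsilon^{(2)}$, while $Q^{(2)}$ additionally depends on $L$; crucially, none of them depends on the pair $(\lambda_{1},\lambda_{2})$ or on $H$. This is exactly what is needed to feed an arbitrary sequence $(H_{N})$ into the construction at the end.

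So first, I would fix the sequence $(\varepsilon_{N}^{(2)})$ and apply Lemma \ref{lem3} with $\varepsilon^{(2)}=\varepsilon_{N}^{(2)}$ to obtain $\Gamma_{N}^{(2)},\Sigma_{N}^{(2)},\Theta_{N}^{(2)}$; then, given $(L_{N})$, apply the second half of the lemma with $L=L_{N}$ to obtain $Q_{N}^{(2)}$. These sequences are fixed once and for all, independently of any $(H_{N})$. Then, given any sequence $(H_{N})$, form the three blocks $B_{N,0}^{(2)},B_{N,1}^{(2)},B_{N,2}^{(2)}$ as in the statement.

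To check that $\{n_{k}^{(2)}\}$ is $2$-Bohr, I would let $(\lambda_{1},\lambda_{2})\in\T^{2}$ and $\eta>0$ be arbitrary, and pick $N$ large enough so that $\varepsilon_{N}^{(2)}<\eta$. Applying Lemma \ref{lem3} with $\varepsilon^{(2)}=\varepsilon_{N}^{(2)}$, $L=L_{N}$, $H=H_{N}$, and with the same pair $(\lambda_{1},\lambda_{2})$, yields an integer $n$ satisfying $|\lambda_{1}^{n}-1|<\varepsilon_{N}^{(2)}<\eta$ and $|\lambda_{2}^{n}-1|<\varepsilon_{N}^{(2)}<\eta$, and belonging to one of $B_{N,0}^{(2)},B_{N,1}^{(2)},B_{N,2}^{(2)}$, hence to $\{n_{k}^{(2)}\}$. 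This is exactly the $2$-Bohr property for $(R_{\lambda_{1}}\times R_{\lambda_{2}})$.

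There is really no obstacle here, only a bookkeeping subtlety: one must verify that in the proof of Lemma \ref{lem3} the parameter $\delta^{(2)}$ (which \emph{does} depend on $L$) never enters the definitions of $\Gamma^{(2)},\Sigma^{(2)},\Theta^{(2)}$, and that $Q^{(2)}$, which is chosen as $Q^{(2)}>c_{2}/\delta^{(2)}$, is allowed to depend on $L$ — both conditions are satisfied by inspection of the proof and are recorded in Corollary \ref{cor1}. Thus Corollary \ref{cor11} is a direct diagonal application of Lemma \ref{lem3}.
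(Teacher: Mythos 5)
Your proposal is correct and is exactly what the paper does: Corollary \ref{cor11} is stated as an immediate consequence of Lemma \ref{lem3} with no written proof, and your argument — apply Lemma \ref{lem3} at scale $N$ with $\varepsilon^{(2)}=\varepsilon_N^{(2)}$, $L=L_N$, $H=H_N$, then for given $(\lambda_1,\lambda_2)$ and $\eta>0$ choose $N$ with $\varepsilon_N^{(2)}<\eta$ — is the intended justification, including the correct observation about the quantifier order (namely that $\Gamma^{(2)},\Sigma^{(2)},\Theta^{(2)}$ are independent of $L$, $Q^{(2)}$ may depend on $L$, and none depend on $H$ or on the pair $(\lambda_1,\lambda_2)$).
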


\par\smallskip
In order to finish the proof, it remains to show that if the two sequences $(L_{N})$ and $(H_{N})$ are well-chosen, there exist $\mu _{0},\mu _{1}, \mu _{2}\in\T$ such that for all $k$
$$|\mu _{0}^{n_{k,0}^{(2)}}-1|>\frac{1}{2}, \quad  |\mu _{1}^{n_{k,1}^{(2)}}-1|>\frac{1}{2}\quad  \textrm{and} \quad |\mu _{2}^{n_{k,2}^{(2)}}-1|>\frac{1}{2}\cdot$$ The construction of $\mu _{0}$ is done exactly as in the proof of the $1$-dimensional case, using the fact that $H_{N+1}$ can be chosen much larger than $H_{N}Q_{N}^{(2)}$. The construction of
$\mu _{1}$ is also the same:
whatever the choices of the integers $L_N$, we can get
$\mu_1\in\T$ with $|\mu_1^{H_{N}\Sigma_{N}^{(2)}L_N}-1|>\frac{1}{2}$ for all $N$ if the sequence $(H_N)$ grows sufficiently fast.
In order to construct $\mu _{2}$, we apply Lemma \ref{lem2} to the sequence $(m_{N})$ defined by $m_{2N}=H_{N}\Gamma   _{N}^{(2)}-1$ and $m_{2N+1}=H_{N}\Gamma _{N}^{(2)}L_{N}$: if we start from $H_{1}$ very large there exists $\mu _{2}\in\T$ very close to $-1$, such that for all $N$
$$|\mu _{2}^{H_{N}\Gamma_{N}^{(2)}-1}-1|\le M\dfrac{H_{N}\Gamma_{N}^{(2)}-1}{H_{N}\Gamma_{N}^{(2)}L_{N}}<M \dfrac{1}{L_{N}}<2^{-(N+1)}$$ if $L_{N}$ is sufficiently large,
and
$$|\mu _{2}^{H_{N}\Gamma_{N}^{(2)} L_{N}}-1|\le M\dfrac{H_{N}\Gamma   _{N}^{(2)}L_{N}}{H_{N+1}}\cdot$$ Then for all $j$ with $1\le j\le \Theta _{N}^{(2)}$, we have
$$|\mu _{2}^{H_{N}\Gamma _{N}^{(2)}L_{N}j}-1|\le M\dfrac{H_{N}\Gamma  _{N}^{(2)}L_{N}\Theta _{N}^{(2)}}{H_{N+1}}<2^{-(N+1)}$$ if $H_{N+1}$ is large enough. Hence $|\mu _{2}^{H_{N}\Gamma   _{N}^{(2)}(L_{N}j+ 1)}-\mu _{2}|\le 2^{-N}$ and
$$|\mu _{2}^{H_{N}\Gamma   _{N}^{(2)}(L_{N}j+ 1)}-1|\ge|\mu_2-1|-2^{-N}\ge\frac{1}{2}\cdot$$
So the set $\{n_{k}^{(2)}\}$ is non-recurrent  for the product of rotations $R_{\mu_{0}}\times R_{\mu_{1}}\times R_{\mu_{2}}$, and Theorem \ref{th2} is proved in the $2$-dimensional case.
\end{proof}

\section{The general multi-dimensional case}

Our aim now is to prove Theorem \ref{th2} in the general case by induction on $r\ge 3$. We are first going to prove the following analog of Lemma \ref{lem3} above, which will give explicitly the form of the sets $\{n_{k}^{(r)}\}$:

\begin{lemma}\label{th10}
Let $r\ge 3$ be an integer. For any $\varepsilon^{(r)}>0$ and any integer $L\ge 1$, there exist $2^{r-1}$ integers $\Delta  _{A}^{(r)}\ge 1$, where $A\subseteq \{1,\ldots, r-1\}$, and two integers $\Theta^{(r)}, Q^{(r)}$ such that the following holds true:
for any integer  $H \ge 1$, for any $r$-tuple $(\lambda _{1},\ldots, \lambda _{r})\in\T^{r}$, there exists an $n$ belonging to one of the sets
\begin{eqnarray}\label{aa}
B_{\varepsilon^{(r)},A}^{(r)}=\{H\Delta  _{A}^{(r)}(Lj+1) \textrm{ ; } 1\le j\le\Theta^{(r)}\}, \quad A\subseteq \{1,\ldots, r-1\}, A\not=\varnothing
\end{eqnarray}
\begin{eqnarray}\label{bb}
B_{\varepsilon^{(r)},\varnothing}^{(r)}=\{H\Delta  _{\varnothing}^{(r)}\}
\end{eqnarray}
and
\begin{eqnarray}\label{cc}
B_{\varepsilon^{(r)},0}^{(r)}=\{Hq+1 \textrm{ ; } 1\le q\le Q^{(r)}\}
\end{eqnarray}
such that 
$$\max_{i=1,\ldots, r}|\lambda _{i}^{n}-1|<\varepsilon^{(r)}.$$ 
We shall write $B_{\varepsilon^{(r)},A}^{(r)}$ as $B_{\varepsilon^{(r)},A}^{(r)}=HC_{\varepsilon^{(r)},A}^{(r)}$ where the sets $C_{\varepsilon^{(r)},A}^{(r)}$ do not depend from $H$.
 \end{lemma}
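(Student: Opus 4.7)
The proof is by induction on $r\ge 2$, the case $r=2$ being Lemma~\ref{lem3}. Fix $\varepsilon^{(r)}>0$ and $L\ge 1$. The plan is to choose auxiliary parameters $\varepsilon^{(r-1)}$ very small \wrt\ $\varepsilon^{(r)}$, then $\delta^{(r)}>0$ very small \wrt\ $\varepsilon^{(r)}$, $L$ and the inductive constants, together with the divisibility multiplier
\[
\Gamma^{(r)}=\bigl((\lfloor c_{r}/\varepsilon^{(r)}\rfloor+1)!\bigr)^{r},
\]
chosen so that every integer $a$ with $1\le|a|<c_{r}/\varepsilon^{(r)}$ (and every product of up to $r$ such integers) divides $\Gamma^{(r)}$. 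Let $E_{\varepsilon^{(r)}}^{(r)}=\{(a_{1},\ldots,a_{r})\in\Z^{r}\setminus\{0\}:\sum_{i}|a_{i}|<c_{r}/\varepsilon^{(r)}\}$. The heart of the argument is a dichotomy on the pair $((\lambda_{1},\ldots,\lambda_{r}),H)$.

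\textbf{Case A (generic position):} if $|\lambda_{1}^{Ha_{1}}\cdots\lambda_{r}^{Ha_{r}}-1|>\delta^{(r)}$ for every $(a_{1},\ldots,a_{r})\in E_{\varepsilon^{(r)}}^{(r)}$, then Corollary~\ref{cor0} applied to $(\lambda_{1}^{H},\ldots,\lambda_{r}^{H})$ with $Q^{(r)}>c_{r}/\delta^{(r)}$ yields $q\in[1,Q^{(r)}]$ with $|\lambda_{i}^{Hq+1}-1|<\varepsilon^{(r)}$ for all $i$, i.e.\ $n=Hq+1\in B_{\varepsilon^{(r)},0}^{(r)}$. \textbf{Case B (near relation):} there exists $(a_{1},\ldots,a_{r})\in E_{\varepsilon^{(r)}}^{(r)}$ with $|\lambda_{1}^{Ha_{1}}\cdots\lambda_{r}^{Ha_{r}}-1|\le\delta^{(r)}$, and after permuting coordinates we may assume $a_{r}\ne 0$. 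Apply the induction hypothesis to $(\lambda_{1},\ldots,\lambda_{r-1})$ with parameter $\varepsilon^{(r-1)}$, the same $L$, and the scaled integer $\tilde H=H\,\Gamma^{(r)}/a_{r}$ (an integer since $a_{r}\mid\Gamma^{(r)}$); this produces an $n'$ in one of the level-$(r-1)$ sets with $|\lambda_{i}^{n'}-1|<\varepsilon^{(r-1)}$ for $i<r$. When $n'$ is a multiple of $\tilde H$ (which happens in every inductive sub-case $A'\ne 0$), raising the near-identity $\prod_{i}\lambda_{i}^{Ha_{i}}\approx 1$ to the integer power $n'/\tilde H$ and combining with the inductive estimates for $\lambda_{1},\ldots,\lambda_{r-1}$ yields
\[
|\lambda_{r}^{n'}-1|\le C\bigl(\varepsilon^{(r-1)}\Gamma^{(r)}+\delta^{(r)}\Gamma^{(r)}\Theta^{(r-1)}L\bigr),
\]
which is $<\varepsilon^{(r)}$ once $\varepsilon^{(r-1)}$ and then $\delta^{(r)}$ are taken sufficiently small. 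The residual sub-case $A'=0$ (direct approximation at level $r-1$, where $n'=\tilde Hq+1$ is not divisible by $\tilde H$) is handled by an extra ad hoc application of Lemma~\ref{lem1} to $\lambda_{r}^{a_{r}}$ at multiplier $\tilde H$, which introduces one further binary branching. The $r-1$ successive binary choices made along the recursion produce the $2^{r-1}$ index subsets $A\subseteq\{1,\ldots,r-1\}$ and place $n$ in the corresponding set $B_{\varepsilon^{(r)},A}^{(r)}$, precisely matching the statement.

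\textbf{Main obstacle:} the order in which the parameters are fixed is delicate. The multipliers $\Gamma^{(r)},\Sigma^{(\cdot)},\Theta^{(\cdot)}$ coming from the inductive layers must depend only on the successive $\varepsilon^{(k)}$'s, independently of $H$ and $L$, so that the $\Delta_{A}^{(r)}$ and $\Theta^{(r)}$ ultimately produced do not depend on $H$; then $\delta^{(r)}$ is fixed in terms of these multipliers and of $L$ so as to absorb the losses incurred when raising the near-identity $\prod_{i}\lambda_{i}^{Ha_{i}}\approx 1$ to a power as large as $\Gamma^{(r)}\Theta^{(r-1)}L$; and only then $Q^{(r)}$ is determined from $\delta^{(r)}$. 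The combinatorial bookkeeping of the $2^{r-1}$ paths through the tree of dichotomies, together with the verification that $\Gamma^{(r)}$ carries enough divisibility for every sub-tuple of indices appearing in the intermediate reductions, is the structural generalization of the four-subcase analysis of Lemma~\ref{lem3} and becomes the genuine technical burden as $r$ grows.
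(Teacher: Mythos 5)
Your induction framework and the generic/near-relation dichotomy match the paper, but your inductive step takes a genuinely different route from the paper's, and that route has a real gap.

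Your plan in Case B is: put $\lambda_r$ aside (WLOG $a_r\ne 0$), apply the inductive hypothesis to $(\lambda_1,\ldots,\lambda_{r-1})$ at the scaled base $\tilde H=H\Gamma^{(r)}/a_r$ to obtain $n'$ with $|\lambda_i^{n'}-1|<\varepsilon^{(r-1)}$ for $i<r$, and then recover control of $\lambda_r^{n'}$ by raising the near-relation $\prod_i\lambda_i^{Ha_i}\approx 1$ to the power $m=n'/(Ha_r)$. The problem is what happens to the other factors. After raising, the exponent attached to $\lambda_i$ ($i<r$) is $Ha_im=(n'/a_r)a_i$. This is an integer, but it is \emph{not} an integer multiple of $n'$: it equals $n'\cdot(a_i/a_r)$, and $a_r\nmid a_i$ in general. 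The inductive estimate only controls $|\lambda_i^{kn'}-1|$ for integer $k$, so the bound $\varepsilon^{(r-1)}$ does not transfer to $|\lambda_i^{Ha_im}-1|$. Your claimed inequality $|\lambda_r^{n'}-1|\le C(\varepsilon^{(r-1)}\Gamma^{(r)}+\delta^{(r)}\Gamma^{(r)}\Theta^{(r-1)}L)$ therefore does not follow, and no choice of $\tilde H$ ($H\Gamma^{(r)}$, $Ha_r$, $H\Gamma^{(r)}a_r$, $\ldots$) removes the obstruction: the issue is that you are trying to pass from control of the individual $\lambda_i$'s to control of the weighted product $\prod_{i<r}\lambda_i^{a_i}$, which requires dividing the exponent by $a_r$.

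The paper sidesteps this entirely by a different decomposition. Rather than applying the rank-$(r-1)$ hypothesis directly to $(\lambda_1,\ldots,\lambda_{r-1})$, it first rewrites the near-relation as $|\nu_1^{Ha_1}-\nu_2^{H}|\le\delta^{(r)}$ with $\nu_1=\lambda_1$ and $\nu_2=\lambda_2^{a_2}\cdots\lambda_r^{a_r}$, and applies Corollary~\ref{cor1} (the $r=2$ step) to the compressed pair $(\nu_1,\nu_2)$. This produces $n^{(2)}=Hp^{(2)}$ with $|\nu_2^{Hp^{(2)}}-1|<\varepsilon^{(2)}$, i.e.\ $|\lambda_2^{\tilde Ha_2}\cdots\lambda_r^{\tilde Ha_r}-1|<\varepsilon^{(2)}$ with $\tilde H=Hp^{(2)}$. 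Since $\varepsilon^{(2)}<\delta^{(r-1)}$, this \emph{is} a near-relation at rank $r-1$ for base $\tilde H$: the rank-$(r-1)$ induction is therefore forced into its first alternative, so $n^{(r-1)}$ is automatically a multiple of $\tilde H$ and the fractional-power problem never arises. It also means the residual sub-case $A'=0$ that you flag and patch with an ad hoc Lemma~\ref{lem1} application simply does not occur. As a side effect your branching count is off: the $2^{r-1}$ sets come in the paper from $2^{r-2}$ choices of $A'$ at rank $r-1$ times the $2$ choices ($\theta\in\{1,2\}$) from Corollary~\ref{cor1}, whereas your scheme gives $2^{r-2}$ from the inductive sub-cases plus a constant number from the ad hoc patch, which matches $2^{r-1}$ only when $r=3$.

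So the missing idea is precisely the reduction to the pair $(\lambda_1,\,\lambda_2^{a_2}\cdots\lambda_r^{a_r})$ via Corollary~\ref{cor1}, which both controls the weighted product directly and guarantees that the rank-$(r-1)$ recursion lands in its near-relation branch.
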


\begin{remark}\label{rem000}
The quantities $\Delta  _{A}^{(r)}$ and $\Theta^{(r)}$ (and $Q^{(r)}$ of course) depend from $\varepsilon^{(r)}$ and $L$ (but not from $H$). This is a difference with the $2$-dimensional case, where the quantities $\Gamma^{(2)}$, $\Sigma^{(2)}$ and $\Theta^{(2)}$ do not depend from $L$. Lemma \ref{th10} holds true for $r=2$ as well, with $\Delta  _{\varnothing}^{(2)}=\Sigma ^{(2)}L$ and $\Delta  _{\{1\}}^{(2)}=\Gamma  ^{(2)}$.
\end{remark}

\begin{proof}[Proof of Lemma \ref{th10}]
Fix $\varepsilon^{(r)}>0$ and  consider the set
$$E_{\varepsilon^{(r)}}^{(r)}=\{(a_{1},\ldots, a_{r})\in\Z^{r}\setminus\{(0,\ldots, 0)\} \textrm{ ; } \sum_{i=1}^{r}|a_{i}|<\frac{c_r}{{\varepsilon}^{(r)}}\},$$ where $c_r$ is the constant appearing in Corollary \ref{cor0}.
Here is the statement which we want to prove by induction on $r\ge 3$:
 
\begin{lemma}\label{prop0}
For any $\varepsilon^{(r)}>0$ and any integer $L\ge 1$, there exist $2^{r-1}$ integers $\Delta  _{A}^{(r)}$, $A\subseteq \{1,\ldots, r-1\}$, an integer
 $\Theta ^{(r)}\ge 1$ and a positive number $\delta ^{(r)}$ such that the following holds true: there exists an integer $Q^{(r)}\ge 1$  such that for any integer
 $H\ge 1$ and any $(\lambda _{1},\ldots, \lambda _{r})\in\T^{r}$, we have the following alternative:
 
 -- if there exists $(a_{1},\ldots, a_{r})\in E_{\varepsilon^{(r)}}^{(r)}$ such that $$|\lambda _{1}^{Ha_{1}}\lambda _{2}^{Ha_{2}}\ldots \lambda _{r}^{Ha_{r}}-1|\le\delta ^{(r)},$$ then there exists $n\in \bigcup_{A\subseteq\{1,\ldots, r-1\}}B_{\varepsilon^{(r)},A}^{(r)}$ such that 
 $$\max_{i=1,\ldots, r}|\lambda _{i}^{n}-1|<\varepsilon^{(r)};$$ 
 
  -- if for all 
 $(a_{1},\ldots, a_{r})\in E_{\varepsilon^{(r)}}^{(r)}$ we have $$|\lambda _{1}^{Ha_{1}}\lambda _{2}^{Ha_{2}}\ldots \lambda _{r}^{Ha_{r}}-1|>\delta ^{(r)},$$
 then there exists $n\in B_{\varepsilon^{(r)},0}^{(r)}$ such that 
 $$\max_{i=1,\ldots, r}|\lambda _{i}^{n}-1|<\varepsilon^{(r)}.$$

\end{lemma}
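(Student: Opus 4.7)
I will prove Lemma \ref{prop0} by induction on $r$, the base case $r=2$ being Lemma \ref{lem3} (with $\Delta_\varnothing^{(2)}=\Sigma^{(2)}L$ and $\Delta_{\{1\}}^{(2)}=\Gamma^{(2)}$, per Remark \ref{rem000}). Fix $r\ge 3$ and assume the statement for $r-1$.

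The second alternative is handled exactly as Case $3$ of Lemma \ref{lem3}: the assumed lower bound on $|\lambda_1^{Ha_1}\cdots\lambda_r^{Ha_r}-1|$ for every $(a_1,\ldots,a_r)\in E_{\varepsilon^{(r)}}^{(r)}$ is precisely the independence condition (\ref{eq4}) of Corollary \ref{cor0} for the tuple $(\lambda_1^H,\ldots,\lambda_r^H)$, with $\varepsilon=\varepsilon^{(r)}$ and $Q=Q^{(r)}:=\lceil c_r/\delta^{(r)}\rceil$. Taking the target $(\mu_1,\ldots,\mu_r)=(\overline{\lambda_1},\ldots,\overline{\lambda_r})$ yields $q\in[1,Q^{(r)}]$ with $|\lambda_i^{Hq+1}-1|<\varepsilon^{(r)}$, so $n=Hq+1\in B_{\varepsilon^{(r)},0}^{(r)}$.

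For the first alternative I mimic Cases $1$--$2$ of Lemma \ref{lem3}. Given the relation $|\lambda_1^{Ha_1}\cdots\lambda_r^{Ha_r}-1|\le\delta^{(r)}$ with $(a_1,\ldots,a_r)\in E_{\varepsilon^{(r)}}^{(r)}$, permute the coordinates so that $a_r\ne 0$. Set
$$\Omega^{(r)}:=\bigl((\lfloor c_r/\varepsilon^{(r)}\rfloor+1)!\bigr)^{r},$$
which is divisible by every product of at most $r$ non-zero entries of tuples in $E_{\varepsilon^{(r)}}^{(r)}$. Apply the inductive hypothesis to $(\lambda_1,\ldots,\lambda_{r-1})$ with modified parameters $\tilde L=L$, $\tilde H=H\Omega^{(r)}/a_r$, and $\tilde\varepsilon^{(r-1)}$ chosen far below $\varepsilon^{(r)}/\Omega^{(r)}$, obtaining an integer $n'$ with $|\lambda_i^{n'}-1|<\tilde\varepsilon^{(r-1)}$ for $i<r$, of the form $n'=\tilde H\cdot c$ with $c\in C_{\tilde\varepsilon^{(r-1)},A'}^{(r-1)}$ for some $A'\subseteq\{1,\ldots,r-2\}$ (the ``$\tilde Hq+1$''-type output of the inductive $B_0$-case reduces to the one-dimensional dichotomy on $\lambda_r$ via Lemma \ref{lem1} in a separate sub-branch). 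Since $a_r\mid\Omega^{(r)}$, the integer $n'$ rewrites as $H\Delta_A^{(r)}(Lj+1)$ or $H\Delta_\varnothing^{(r)}$ after absorbing $\Omega^{(r)}/a_r$ into new constants $\Delta_A^{(r)}:=\Omega^{(r)}\Delta_{A'}^{(r-1)}/a_r$, suitably indexed by $A\subseteq\{1,\ldots,r-1\}$. Control over $\lambda_r$ at the point $n'$ is then recovered as in the transfer of Cases $1$a--$1$b of Lemma \ref{lem3}: raise the relation to the power $n'/(Ha_r)$ (an integer by construction), bound the other factors using the estimates $|\lambda_i^{n'}-1|<\tilde\varepsilon^{(r-1)}$ raised to powers $|a_i|$, and isolate $|\lambda_r^{n'}-1|<\varepsilon^{(r)}$.

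The main obstacles are the combinatorial bookkeeping of the $2^{r-1}$ labels $A\subseteq\{1,\ldots,r-1\}$ (arising from the $2^{r-2}$ inductive labels $A'$ together with the binary choice of whether the new pivot index $r-1$ joins $A$, in analogy with the doubling $\{\varnothing\}\to\{\varnothing,\{1\}\}$ of the $r=1\to r=2$ step) and the correct order of quantification of constants: one fixes $\varepsilon^{(r)}$ and $L$ first, then $\Omega^{(r)}$ and $\tilde\varepsilon^{(r-1)}$ (which determine, via the induction, the $\Delta_{A'}^{(r-1)}$, $\Theta^{(r-1)}$, and $Q^{(r-1)}$), next $\Delta_A^{(r)}$ and $\Theta^{(r)}:=\Theta^{(r-1)}$, then $\delta^{(r)}$ much smaller than $\varepsilon^{(r)}$ divided by every multiplicative amplification factor appearing in the transfer (including factors of order $L\Theta^{(r-1)}$, which control the powers to which the relation is raised), and finally $Q^{(r)}$.
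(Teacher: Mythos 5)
Your induction skeleton and the treatment of the second alternative (via Corollary \ref{cor0} with $Q^{(r)}>c_r/\delta^{(r)}$) match the paper. But the first alternative is where the real work lies, and your transfer step there has a genuine gap. You apply the inductive hypothesis to $(\lambda_1,\ldots,\lambda_{r-1})$ at $\tilde H=H\Omega^{(r)}/a_r$, obtain $n'=\tilde Hc$ with $|\lambda_i^{n'}-1|<\tilde\varepsilon^{(r-1)}$ for $i<r$, and then raise the near-relation to the power $m=n'/(Ha_r)$ to get
\[
\left|\,\prod_{i<r}\lambda_i^{\,a_in'/a_r}\cdot\lambda_r^{\,n'}-1\,\right|\le m\,\delta^{(r)},
\]
hoping to bound $|\lambda_i^{\,a_in'/a_r}-1|$ by $|a_i|\,\tilde\varepsilon^{(r-1)}$. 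But $a_in'/a_r$ is \emph{not} an integer multiple of $n'$ when $a_r\nmid a_i$, and from $\lambda_i^{n'}\approx 1$ one can only conclude that $\lambda_i^{n'/a_r}$ lies near \emph{some} $a_r$-th root of unity (not necessarily near $1$); so the claimed amplification estimate fails and control of $\lambda_r^{n'}$ cannot be recovered this way. In addition, the proposed constant $\Delta_A^{(r)}=\Omega^{(r)}\Delta_{A'}^{(r-1)}/a_r$ depends on the pivot entry $a_r$, contradicting the requirement that there be exactly $2^{r-1}$ fixed integers $\Delta_A^{(r)}$; and the sub-branch where the inductive call lands in its $B_0$-alternative (producing an $n'$ of the form $\tilde Hq+1$, not a multiple of $\tilde H$) is only gestured at, though it cannot be ruled out in your set-up.

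The paper's proof avoids all three obstacles with a different decomposition of the first alternative. Choosing coordinates so that $(a_2,\ldots,a_r)\ne(0,\ldots,0)$, one sets $\nu_1=\lambda_1$ and $\nu_2=\lambda_2^{a_2}\cdots\lambda_r^{a_r}$, observes that $(a_1,1)\in E_{\varepsilon^{(2)}}^{(2)}$ is a near-relation for $(\nu_1,\nu_2)$ at level $\delta^{(r)}<\delta^{(2)}$, and invokes Corollary \ref{cor1}: this yields a multiplier $p^{(2)}$ drawn from a \emph{fixed finite set} $C_{\varepsilon^{(2)},1}^{(2)}\cup C_{\varepsilon^{(2)},2}^{(2)}$ (independent of the $a_i$) with $|\lambda_1^{Hp^{(2)}}-1|<\varepsilon^{(2)}$ and $|\lambda_2^{Hp^{(2)}a_2}\cdots\lambda_r^{Hp^{(2)}a_r}-1|<\varepsilon^{(2)}$. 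The second inequality is precisely a near-relation for $(\lambda_2,\ldots,\lambda_r)$ at scale $\tilde H=Hp^{(2)}$ (since $\varepsilon^{(2)}<\delta^{(r-1)}$ and $(a_2,\ldots,a_r)\in E_{\varepsilon^{(r-1)}}^{(r-1)}$), which \emph{forces} the $(r-1)$-dimensional inductive hypothesis applied to $(\lambda_2,\ldots,\lambda_r)$ into its first alternative — so the $B_0$ sub-branch never arises — and produces a second multiplier $p^{(r-1)}$ from another fixed finite set. The integer $n=Hp^{(2)}p^{(r-1)}$ then has the required form $H\Delta_A^{(r)}(Lj+1)$ or $H\Delta_\varnothing^{(r)}$ with $\Delta_A^{(r)}$ equal to $\Sigma^{(2)}L\Delta_{A'}^{(r-1)}$ or $\Gamma^{(2)}\Delta_{A'}^{(r-1)}$, a fixed constant independent of $(a_1,\ldots,a_r)$.
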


\begin{proof}[Proof of Lemma \ref{prop0}]
We prove Lemma \ref{prop0} by induction on $r\ge 2$. First, it follows from
Corollary \ref{cor1} that Lemma \ref{prop0} holds true for $r=2$, with $\Delta  _{\varnothing}^{(r)}=\Sigma^{(2)}L$ and $\Delta  _{\{1\}}^{(r)}=\Gamma^{(2)}$.
To carry out the induction step, 
let $r\ge 3$,  $\varepsilon ^{(r)}>0$ and $L\ge 1$.
Let $\varepsilon^{(r-1)}$ and $\varepsilon ^{(2)}$ be two positive numbers, and let $\delta^{(r-1)}$ and $\delta ^{(2)}$ be the two positive numbers associated to $\varepsilon^{(r-1)}$ and $L$, and $\varepsilon ^{(2)}$, respectively, given by Lemma \ref{prop0} for $r-1$ and by Lemma \ref{lem3}.
The numbers $\varepsilon^{(r-1)}$ and $\varepsilon ^{(2)}$ will be fixed during the proof, as well as the number $\delta^{(r)}>0$.
The quantity $\varepsilon ^{(r-1)}$ will be determined first, much smaller than $\varepsilon ^{(r)}$. This choice will determine $\delta ^{(r-1)}$. Then $\varepsilon ^{(2)}$ will be chosen much smaller than $\varepsilon^{(r-1)}$, $\delta^{(r-1)}$ and $\varepsilon^{(r)}$ (and this will determine $\delta ^{(2)}$), and only after this will the choice of $\delta^{(r)}$ be made, with $\delta ^{(r)}$ much smaller than any of the quantities considered before.
 \par\smallskip 
Fix $(\lambda _{1},\ldots, \lambda _{r})\in\T^{r}$. 
We consider again separately two cases, depending from whether $|\lambda _{1}^{Ha_{1}}\lambda _{2}^{Ha_{2}}\ldots \lambda _{r}^{Ha_{r}}-1| \le \delta ^{(r)}$ for some $(a_{1},\ldots, a_{r})\in E_{\varepsilon ^{(r)}}^{(r)}$ or not.
 \par\smallskip 
 \textbf{Case 1:} There exists $(a_{1},\ldots, a_{r})\in E_{\varepsilon ^{(r)}}^{(r)}$ such that 
 \begin{eqnarray}\label{id}
 |\lambda _{1}^{Ha_{1}}\lambda _{2}^{Ha_{2}}\ldots \lambda _{r}^{Ha_{r}}-1|\le\delta ^{(r)}.                                                                                                
\end{eqnarray}
 Without loss of generality we suppose that $(a_{2},\ldots, a_{r})\not = (0,\ldots, 0)$.
Then, replacing $a_1$ by $-a_1$, (\ref{id}) is equivalent to
\begin{eqnarray}\label{annexe4}
 |\lambda _{1}^{Ha_{1}}-\lambda _{2}^{Ha_{2}}\ldots \lambda _{r}^{Ha_{r}}|\le\delta ^{(r)}
\end{eqnarray}
for some $(a_{1},\ldots, a_{r})\in E_{\varepsilon ^{(r)}}^{(r)}$.
 Set $\nu _{1}=\lambda _{1}$ and $\nu _{2}=\lambda _{2}^{a_{2}}\ldots \lambda _{r}^{a_{r}}$. 
We have $|\nu _{1}^{Ha_{1}}-\nu _{2}^{H}|\le \delta ^{(r)}$, i.e. $$|\nu _{1}^{H\tilde{a}_{1}}-\nu _{2}^{H\tilde{a}_{2}}|\le \delta ^{(r)}$$ with 
$(\tilde{a}_{1},\tilde{a}_{2})=(a_{1},1)\in\Z^{2}\setminus\{(0,0)\}$. 
We have $|a_{1}|+1\le 2|a_{1}|<2\frac{c_{r}}{\varepsilon^{(r)}}<\frac{c_{2}}{\varepsilon^{(2)}}$ if ${\varepsilon^{(2)}}$
is small enough. So we get that $(\tilde{a}_{1},\tilde{a}_{2})$ belongs to $E_{\varepsilon ^{(2)}}^{(2)}$. 
If $\delta ^{(r)}$ is chosen so that $\delta ^{(r)}<\delta ^{(2)}$, 
 (\ref{annexe4}) implies that 
$$|\lambda _{1}^{Ha_{1}}-\lambda _{2}^{Ha_{2}}\ldots \lambda _{r}^{Ha_{r}}|\le\delta ^{(2)}.$$ We can now apply Corollary \ref{cor1} to $\varepsilon ^{(2)}$ and $L$: we get that there exists 
$n^{(2)}\in B_{\varepsilon ^{(2)},1}^{(2)}\cup
 B_{\varepsilon ^{(2)},2}^{(2)}$ such that
\begin{eqnarray}\label{zzz}
 |\nu _{1}^{n^{(2)}}-1|<\varepsilon^{(2)}\quad \textrm{ and }\quad 
 |\nu _{2}^{n^{(2)}}-1|<\varepsilon^{(2)}.
\end{eqnarray}
The integer
 $n^{(2)}$ is either equal to 
 $H\Sigma  ^{(2)}L$ or has the form $n^{(2)}=H\Gamma ^{(2)}(Lj+ 1)$ for some $1\le j\le \Theta ^{(2)}$.
In particular $n^{(2)}$ is a multiple of $H$, and we write $n^{(2)}=Hp^{(2)}$ with $p^{(2)}\in 
 C_{\varepsilon ^{(2)},1}^{(2)}\cup
 C_{\varepsilon ^{(2)},2}^{(2)}$. So we have
 $$|\lambda _{1}^{Hp^{(2)}}-1|<\varepsilon^{(2)}\quad \textrm{ and }\quad |\lambda _{2}^{Hp^{(2)}a_{2}}\ldots \lambda _{r}^{Hp^{(2)}a_{r}}-1|<\varepsilon^{(2)}.$$
 The $(r-1)$-tuple $(a_{2},\ldots, a_{r})$ satisfies
 $|a_{2}|+\ldots |a_{r}|<\frac{c_r}{\varepsilon^{(r)}}<\frac{c_{r-1}}{\varepsilon^{(r-1)}}$ if 
${\varepsilon^{(r-1)}}$ is chosen sufficiently small. 
Moreover $(a_{2}, \ldots, a_{r})\not = (0,\ldots, 0)$, and so 
 $(a_{2}, \ldots, a_{r})$ belongs to $ E_{\varepsilon^{(r-1)}}^{(r-1)}$.
If additionally $\varepsilon ^{(2)}$ is so small that ${\varepsilon^{(2)}}<{\delta^{(r-1)}}$, the induction assumption applied at rank $r-1$ to the $(r-1)$-tuple
$(\lambda _{2}, \ldots, \lambda _{r-1})$ and the integers $L$ and $\tilde{H}=Hp^{(2)}$ gives us 
an integer $n^{(r-1)}$ belonging to the set $\bigcup_{A'\subseteq\{1,\ldots, r-2\}}\tilde{H}
 C_{\varepsilon^{(r-1)},A'}^{(r-1)}$ such that
 $$\max_{i=2,\ldots, r}|\lambda _{i}^{n^{(r-1)}}-1|<{\varepsilon^{(r-1)}}.$$
 Notice that we can choose $\varepsilon ^{(r-1)}$, and so, by Corollary \ref{cor1}, determine $ C_{\varepsilon^{(r-1)},A'}^{(r-1)}$, before we fix $\varepsilon ^{(2)}$.
 Writing $n^{(r-1)}=\tilde{H}p^{(r-1)}$, we have $n^{(r-1)}={H}p^{(2)}
 p^{(r-1)}.$ Thus 
 $$\max_{i=2,\ldots, r}|\lambda _{i}^{{H}p^{(2)}
 p^{(r-1)}}-1|<{\varepsilon^{(r-1)}}.$$
 Moreover we have by (\ref{zzz}) that  $$|\lambda _{1}^{Hp^{(2)}}-1|<{\varepsilon^{(2)}},\quad \textrm{and so}\quad 
 |\lambda _{1}^{Hp^{(2)}p^{(r-1)}}-1|<{\varepsilon^{(2)}}p^{(r-1)}\cdot$$
 Now
 $$p^{(r-1)}\le\max_{A'\subseteq \{1,\ldots, r-2\}}\Delta  _{A'}^{(r-1)}(L\Theta^{(r-1)}+1).$$
Hence if we first fix ${\varepsilon^{(r-1)}}$ very small, we can then take ${\varepsilon^{(2)}}$ so small that (in addition to the other conditions) ${\varepsilon^{(2)}}p^{(r-1)}<{\varepsilon^{(r-1)}}$ for any $p^{(r-1)}\in \bigcup_{A'\subseteq\{1,\ldots, r-2\}}
 C_{\varepsilon^{(r-1)},A'}^{(r-1)}$.
We obtain then that 
 $$\max_{i=1,\ldots, r}|\lambda _{i}^{{H}p^{(2)}
 p^{(r-1)}}-1|<{\varepsilon^{(r-1)}},$$ and if we have taken at the beginning ${\varepsilon^{(r-1)}}<{\varepsilon^{(r)}}$ we get what we need, namely that
 $$\max_{i=1,\ldots, r}|\lambda _{i}^{{H}p^{(2)}
 p^{(r-1)}}-1|<{\varepsilon^{(r)}}.$$
 \par\smallskip
 It remains to check that the numbers $Hp^{(2)}p^{(r-1)}$ belong to a set of the form $B_{\varepsilon^{(r)},A}^{(r)}$. We know that such a number belongs to a set of the form
 $${H}C_{\varepsilon^{(2)}, \theta }^{(2)}.C_{\varepsilon^{(r-1)},A'}^{(r-1)}$$ for some $A'\subseteq \{1,\ldots, r-2\}$ and $\theta \in\{1,2\}$.
If $\theta =1$ such a set has the form
 \begin{eqnarray}\label{set1}
\{ H\Sigma ^{(2)} L \Delta _{A'}^{(r-1)} (Lj_{r-1}+1) \textrm{ ; }
1\le j_{r-1}\le \Theta ^{(r-1)} \}
 \end{eqnarray} if $A'\not=\varnothing $ and
 \begin{eqnarray}\label{set2}
\{ H\Sigma ^{(2)} L \Delta _{\varnothing}^{(r-1)} \}
 \end{eqnarray}
if $A'=\varnothing$.
If $\theta =2$ such a set has the form
\begin{eqnarray}\label{set3}
\{ H \Gamma^{(2)}\Delta _{A'}^{(r-1)}(Lj_2+1)(Lj_{r-1}+1) \textrm{ ; }
1\le j_{2}\le \Theta^{(2)}, \, 1\le j_{r-1}\le \Theta^{(r-1)}
\} 
\end{eqnarray}
if $A'\not=\varnothing $ and
 \begin{eqnarray}\label{set4}
\{ H\Gamma ^{(2)} \Delta _{\varnothing}^{(r-1)}(Lj_2+1) \textrm{ ; }
1\le j_{2}\le \Theta^{(2)}\} 
\end{eqnarray}
if $A'=\varnothing$.
Observing that the set in (\ref{set3}) is contained in 
 \begin{eqnarray*}
\{ H \Gamma^{(2)}\Delta _{A'}^{(r-1)}(Lj+1) \textrm{ ; }
1\le j\le \Theta^{(2)} \Theta ^{(r-1)}
\},
 \end{eqnarray*}
we see that these four sets have the required form: if we set 
$\Theta^{(r)}=\max(\Theta ^{(2)},\Theta ^{(r-1)})$, we have
$${H}C_{\varepsilon^{(2)},1}^{(2)}.C_{\varepsilon^{(r-1)},A'}^{(r-1)}\subseteq B_{\varepsilon^{(r)},A}^{(r)}$$
 with $A=A'\subset\{1,\ldots,r-1\}$ and $ \Delta _{A}^{(r)}=\Sigma ^{(2)}L\Delta _{A'}^{(r-1)}$ (observe that with this definition of $\Delta  _{A}^{(r)}$ we have $\Delta  _{A}^{(r)}=\Delta _{\varnothing}^{(2)}\Delta _{A'}^{(r-1)}$),
and
$${H}C_{\varepsilon^{(2)},2}^{(2)}.C_{\varepsilon^{(r-1)},A'}^{(r-1)}\subseteq B_{\varepsilon^{(r)},A}^{(r)}$$
with $A=A'\cup\{r-1\}\subseteq\{1,\ldots,r-1\} $ and $ \Delta  _{A}^{(r)}=\Gamma  ^{(2)}\Delta _{A'}^{(r-1)}$ (observe that in this case $\Delta  _{A}^{(r)}=\Delta _{\{1\}}^{(2)}\Delta _{A'}^{(r-1)}$).
 \par\smallskip
At the end of this case, the quantities $\Delta_{A} ^{(r)}$, $\Theta^{(r)}$  and $\delta^{(r)}$  are fixed. They depend from $\varepsilon^{(r)}$ and $L$, but not from $H$. It remains to determine $Q^{(r)}$.
 \par\smallskip
 \textbf{Case 2:} For all $(a_{1},\ldots, a_{r})\in E_{\varepsilon^{(r)}}^{(r)}$, $$|\lambda _{1}^{Ha_{1}}\lambda _{2}^{Ha_{2}}\ldots \lambda _{r}^{Ha_{r}}-1|>\delta ^{(r)}.$$
Let $Q^{(r)}$ be an integer such that $Q^{(r)}>\frac{c_r}{\delta ^{(r)}}$. By Corollary \ref{cor0}, there exists  an integer $q$ with $1\le q \le Q^{(r)}$ such that $\max_{i=1,\ldots, r}|\lambda _{i}^{Hq+1}-1|<\varepsilon^{(r)}$, and so 
 $\max_{i=1,\ldots, r}|\lambda _{i}^{n}-1|<\varepsilon^{(r)}$for some $n\in B_{\varepsilon^{(r)},0}^{(r)}$.
 \par\smallskip
We have thus proved Lemma \ref{prop0} at rank $r$, and the principle of induction completes the proof.
\end{proof}

A direct corollary of Lemma \ref{th10} is:

\begin{corollary}\label{cor111}
  Let $r\ge 3$, let $(\varepsilon _{N}^{(r)})$ be a sequence of positive numbers going to zero as $N$ goes to infinity and $(L_{N})$ be any sequence of integers. There exist $2^{r-1}$ sequences of integers $(\Delta  _{N,A}^{(r)})$, $A\subseteq\{1,\ldots, r-1\}$, and two sequences $(\Theta _{N}^{(r)})$ and $(Q_{N}^{(r)})$ of integers such that for any sequence of integers $(H_{N})$, the union $\{n_{k}^{(r)}\}$ of the sets
$$
\{n_{k,0}^{(r)}\}=\bigcup_{N\ge 1}B_{N,0}^{(r)}
\quad \textrm{ and }\quad 
\{n_{k,A}^{(r)}\}=\bigcup_{N\ge 1}B_{N,A}^{(r)}, \quad  A\subseteq\{1,\ldots, r-1\}
$$
where 
\begin{eqnarray*}
B_{N,0}^{(r)}&=& B_{\varepsilon _{N}^{(r)},0}^{(r)}=\{H_{N}q+ 1 \textrm{ ; } 1\le q\le Q_{N}^{(r)}\}\\
B_{N,\varnothing}^{(r)}&=& B_{\varepsilon _{N}^{(r)},\varnothing}^{(r)}=\{H_N  \Delta_{N,\varnothing}^{(r)}\}\\
B_{N,A}^{(r)}&=& B_{\varepsilon _{N}^{(r)},A}^{(r)}= \{H_N\Delta    _{N,A}^{(r)} (L_N j + 1)\textrm{ ; } 1\le j\le \Theta_N^{(r)} \}
\end{eqnarray*}
is an $r$-Bohr set. 
\end{corollary}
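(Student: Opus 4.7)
The plan is to obtain Corollary \ref{cor111} as a direct consequence of Lemma \ref{th10} applied block by block, essentially following the same scheme that was already used to deduce Corollary \ref{cor11} from Lemma \ref{lem3} in the $2$-dimensional case.

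First, for each $N \ge 1$ I apply Lemma \ref{th10} to $\varepsilon_N^{(r)}$ and the given integer $L_N$. This produces the $2^{r-1}$ integers $\Delta_{N,A}^{(r)}$ (for $A \subseteq \{1,\ldots, r-1\}$), the integer $\Theta_N^{(r)}$, the auxiliary positive number $\delta_N^{(r)}$, and finally the integer $Q_N^{(r)}$. Crucially, all these quantities are extracted from Lemma \ref{th10} before the sequence $(H_N)$ is chosen, and they do not depend on $H_N$. This matches exactly the order of quantifiers required in the statement of the corollary: the sequences $(\Delta_{N,A}^{(r)})$, $(\Theta_N^{(r)})$, $(Q_N^{(r)})$ are fixed once $(\varepsilon_N^{(r)})$ and $(L_N)$ are given, while $(H_N)$ remains free.

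Next, given any sequence $(H_N)$ and any $r$-tuple $(\lambda_1,\ldots,\lambda_r) \in \T^r$, I want to show that the union $\{n_k^{(r)}\}$ of the sets $\{n_{k,0}^{(r)}\}$ and $\{n_{k,A}^{(r)}\}$ is an $r$-Bohr set. So fix $\varepsilon > 0$ and choose $N$ large enough so that $\varepsilon_N^{(r)} < \varepsilon$; this is possible because $\varepsilon_N^{(r)} \to 0$. Apply Lemma \ref{th10} at level $N$ to the $r$-tuple $(\lambda_1,\ldots,\lambda_r)$ and the integer $H_N$: whichever of the two alternatives of the underlying Lemma \ref{prop0} holds for this pair, the lemma delivers an integer
\[
n \in B_{N,0}^{(r)} \cup \bigcup_{A \subseteq \{1,\ldots,r-1\}} B_{N,A}^{(r)}
\]
such that $\max_{i=1,\ldots,r} |\lambda_i^n - 1| < \varepsilon_N^{(r)} < \varepsilon$. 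By construction this $n$ belongs to the set $\{n_k^{(r)}\}$, so $\{n_k^{(r)}\}$ is recurrent for $R_{\lambda_1} \times \cdots \times R_{\lambda_r}$, which is exactly what it means for the set to be $r$-Bohr.

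There is really no substantive obstacle here; the work has already been done in Lemma \ref{th10} (equivalently Lemma \ref{prop0}), whose statement is precisely tailored to produce, uniformly in $H$ and in $(\lambda_1,\ldots,\lambda_r) \in \T^r$, an approximating integer in the prescribed block structure. The corollary amounts to packaging this one-step approximation into a countable union indexed by $N$ and observing that, since $\varepsilon_N^{(r)} \to 0$, the union catches arbitrarily good simultaneous approximations of $\lambda_1,\ldots,\lambda_r$ by $1$.
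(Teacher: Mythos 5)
Your proof is correct and matches the paper's approach exactly: the paper also deduces Corollary \ref{cor111} as a direct consequence of Lemma \ref{th10} by applying it to each pair $(\varepsilon_N^{(r)}, L_N)$, noting that the resulting $\Delta_{N,A}^{(r)}$, $\Theta_N^{(r)}$, $Q_N^{(r)}$ are fixed before $(H_N)$ is chosen. Your write-up simply spells out the straightforward $\varepsilon$-argument that the paper leaves implicit.
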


The quantities $\Delta  _{N,A}^{(r)}$, $\Theta _{N}^{(r)}$ and $Q_{N}^{(r)}$ are obtained by applying Lemma \ref{th10} to the numbers $\varepsilon _{N}^{(r)}$ and $L_{N}$.
In order to finish the proof of Theorem \ref{th2}, it remains to prove that
 $\{n_{k}^{(r)}\}$ is not a recurrence set for some product of $2^{r-1}+1$ rotations if the sequences $(L_{N})$ and $(H_{N})$ grow sufficiently fast.

\begin{proposition}\label{prop1}
Let $r\ge 3$. If the sequences $(L_{N})$ and $(H_{N})$ grow sufficiently fast, with
$L_{N}\ll H_{N}\ll L_{N+1}$, then there exist $2^{r-1}+1$ elements $\mu _{0}$ and $ \mu _{A}$ of $\T$ such that for any $k$ and any $A\subseteq \{1,\ldots, r-1\}$,
 \begin{eqnarray}\label{ee1}
  |\mu _{0}^{n_{k,0}^{(r)}}-1|>\frac{1}{2}
 \quad \textrm{and}\quad 
 |\mu _{A}^{n_{k,A}^{(r)}}-1|>\frac{1}{2}\cdot
\end{eqnarray}
\end{proposition}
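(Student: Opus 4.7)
The plan is to construct each of the $2^{r-1}+1$ elements $\mu_0$, $\mu_{\varnothing}$, $(\mu_A)_{A\neq\varnothing}$ independently, by applying Lemma \ref{lem2} to a lacunary sequence tailored to the set whose elements it must avoid. The crucial structural fact, recorded in Remark \ref{rem000}, is that once $\varepsilon_N^{(r)}$ and $L_N$ are fixed, the quantities $\Delta_{N,A}^{(r)}$, $\Theta_N^{(r)}$ and $Q_N^{(r)}$ are all determined, independently of $H_N$ or $H_{N+1}$. Accordingly I will take $\varepsilon_N^{(r)}=2^{-N}$ and build $(L_N)$, $(H_N)$ inductively in the order $H_1,L_1,H_2,L_2,\ldots$, starting with $H_1$ very large.

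For $\mu_0$ the argument will be exactly as in the $r=1$ case: apply Lemma \ref{lem2} to $m_N=H_N$ and ensure $H_{N+1}\gg H_N Q_N^{(r)}2^N$, obtaining $\mu_0\in\T$ with $|\mu_0+1|\le 6\pi/H_1$ and $|\mu_0^{H_Nq+1}-\mu_0|<2^{-N}$ for every $1\le q\le Q_N^{(r)}$; choosing $H_1$ large then makes $|\mu_0^n-1|>\frac{1}{2}$ for every $n\in B_{N,0}^{(r)}$. The construction of $\mu_{\varnothing}$ is of the same type, applied to $m_N=H_N\Delta_{N,\varnothing}^{(r)}-1$: since $\Delta_{N,\varnothing}^{(r)}$ is frozen once $L_N$ is fixed, the ratio $m_{N+1}/m_N$ can be made as large as desired by increasing $H_{N+1}$, which yields $\mu_{\varnothing}$ close to $-1$ with $|\mu_{\varnothing}^{H_N\Delta_{N,\varnothing}^{(r)}}-\mu_{\varnothing}|<2^{-N}$ and hence $|\mu_{\varnothing}^n-1|>\frac{1}{2}$ on $B_{N,\varnothing}^{(r)}$.

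For each nonempty $A\subseteq\{1,\ldots,r-1\}$ I will imitate the construction of $\mu_2$ from the $2$-dimensional case, applying Lemma \ref{lem2} to the interleaved sequence
\[
m_{2N}^A=H_N\Delta_{N,A}^{(r)}-1,\qquad m_{2N+1}^A=H_N\Delta_{N,A}^{(r)}L_N.
\]
The consecutive ratios are of order $L_N$ and $H_{N+1}\Delta_{N+1,A}^{(r)}/(H_N\Delta_{N,A}^{(r)}L_N)$, so choosing $L_N$ so that $M/L_N<2^{-N-1}$ and then $H_{N+1}\gg H_N\Delta_{N,A}^{(r)}L_N\Theta_N^{(r)}2^N$ for every $A$ produces $\mu_A\in\T$ close to $-1$ satisfying $|\mu_A^{H_N\Delta_{N,A}^{(r)}-1}-1|\le M/L_N$ and $|\mu_A^{H_N\Delta_{N,A}^{(r)}L_N j}-1|<2^{-N-1}$ for all $1\le j\le\Theta_N^{(r)}$. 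Writing $n=H_N\Delta_{N,A}^{(r)}L_N j+H_N\Delta_{N,A}^{(r)}$ and combining these bounds gives $|\mu_A^n-\mu_A|<2^{-N}$, hence $|\mu_A^n-1|\ge|\mu_A-1|-2^{-N}>\frac{1}{2}$ provided $H_1$ is large enough.

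The main (purely organisational) obstacle will be to show that a single pair of sequences $(L_N),(H_N)$ makes all $2^{r-1}+1$ applications of Lemma \ref{lem2} succeed at once. This is precisely what Remark \ref{rem000} allows: at stage $N$, I first fix $L_N$ large enough to handle the $M/L_N$-bounds demanded by the $2^{r-1}-1$ nonempty subsets $A$, which freezes the whole family $(\Delta_{N,A}^{(r)},\Theta_N^{(r)},Q_N^{(r)})_A$, and only afterwards pick $H_{N+1}$ large enough that every bound with $H_{N+1}$ in the denominator, coming from the $\mu_0$, $\mu_{\varnothing}$ and $\mu_A$ constructions, is $<2^{-N}$. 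This finite list of constraints at each $N$ admits a common large solution, and iterating produces the sequences $(L_N),(H_N)$ required by the proposition.
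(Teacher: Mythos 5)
Your proof is correct and follows essentially the same route as the paper: you obtain $\mu_0$ and $\mu_{\varnothing}$ exactly as the authors do, and for each nonempty $A$ you apply Lemma~\ref{lem2} to the same interleaved lacunary sequence $m_{2N}=H_N\Delta_{N,A}^{(r)}-1$, $m_{2N+1}=H_N\Delta_{N,A}^{(r)}L_N$, splitting $n-1=m_{2N+1}j+m_{2N}$ and using $M/L_N$ and $M\Theta_N^{(r)}H_N\Delta_{N,A}^{(r)}L_N/H_{N+1}$ to bound the two pieces. The concluding bookkeeping --- freeze $(\Delta_{N,A}^{(r)},\Theta_N^{(r)},Q_N^{(r)})$ via $L_N$ and only then pick $H_{N+1}$ large enough for all finitely many constraints at stage $N$, as licensed by Remark~\ref{rem000} --- is likewise the argument the paper uses.
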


\begin{proof}[Proof of Proposition \ref{prop1}]
We obtain  $\mu _{0}$ in exactly the same way as in the proof of Theorem \ref{th2}. The construction of $\mu _{\varnothing}$ is also similar: whatever the choice of $L_{N}$, we can ensure that for some $\mu _{\varnothing}\in\T$ with $|\mu _{\varnothing}+1|<1$,
$$|\mu _{\varnothing}^{H_{N}\Delta  _{N,\varnothing}^{(r)}-1}-1|<2^{-N}$$ for all $N\ge 1$, provided the sequence $(H_{N})$ grows sufficiently fast. This shows that $$|\mu _{\varnothing}^{H_{N}\Delta  _{N,\varnothing}^{(r)}}-1|\ge \frac{1}{2}$$ for all $N\ge 1$. 
Let now $A\subseteq\{1,\ldots, r-1\}$, $A\not =\varnothing$,  and consider the sequence $(m_{N})$ defined by
$m_{2N}=H_{N}\Delta _{N,A}^{(r)}-1$ and $m_{2N+1}=H_{N}\Delta _{N,A}^{(r)}L_{N}$.
The argument is again the same as in the proof of the $2$-dimensional case (the fact that $\Delta  _{N,A}^{(r)}$ depends from $L_{N}$ does not play a role here).
By Lemma \ref{lem3} again, if $H_{1}$ is very large we can find $\mu _{A}$ very close to $-1$ such that for all $N\ge 1$,
$$|\mu _{A}^{H_{N}\Delta _{N,A}^{(r)}-1}-1|\le M\frac{H_{N}\Delta _{N,A}^{(r)}-1}{H_{N}\Delta _{N,A}^{(r)}L_{N}}
\quad \textrm{and}\quad 
|\mu _{A}^{H_{N}\Delta _{N,A}^{(r)}L_{N}}-1|\le M\frac{H_{N}\Delta _{N,A}^{(r)}L_{N}}{H_{N+1}}\cdot$$
Thus $$|\mu _{A}^{H_{N}\Delta _{N,A}^{(r)}L_{N}j}-1|\le M\Theta_{N}^{(r)}\frac{H_{N}\Delta _{N,A}^{(r)}L_{N}}{H_{N+1}}$$
for all $1\le j\le \Theta _{N}^{(r)}$. It follows that for all $N\ge 1$ and all $1\le j\le \Theta _{N}^{(r)}$,
 $$|\mu _{A}^{H_{N}\Delta _{N,A}^{(r)}(L_{N}j+1)}-\mu_{A}|\le \frac{M}{L_{N}} + M\Theta_{N}^{(r)}\frac{H_{N}\Delta _{N,A}^{(r)}L_{N}}{H_{N+1}}\cdot$$ If for each $N\ge 1$ we take $L_{N}$ very large, and then choose $H_{N+1}$ very large \wrt\ $L_{N}$ and $H_{N}$, we can ensure for instance that 
$$|\mu _{A}^{H_{N}\Delta _{N,A}^{(r)}(L_{N}j+1)}-\mu_{A}|\le 2^{-N}, \textrm{ so that } 
|\mu _{A}^{H_{N}\Delta _{N,A}^{(r)}(L_{N}j+1)}-1|\ge |\mu _{A}-1|-2^{-N}>\frac{1}{2}$$ if $|\mu _{A}-1|>1$. 
This proves that $|\mu _{A}^{n_{k,A}^{(r)}}-1|>\frac{1}{2}$ for all $k$, and
 Proposition \ref{prop1} is proved.
\end{proof}

We have thus exhibited a product of $2^{r-1}+1$  rotations for which $\{n_{k}^{(r)}\}$ is not a recurrence set. Theorem \ref{th2} is proved.
 \end{proof}
 
 \begin{remark}\label{remrem}
  Inspection of the proof of Theorem \ref{th2} shows that the same phenomenon appears for general $r$ as for $r=1$: the sets $B_{N,A}^{(r)}$, $B_{N,0}^{(r)}$ are by construction intertwined, and they cannot be forced far away one from another. Indeed, for any $\varepsilon >0$, let us write $\Gamma ^{(2)}$, $\Sigma  ^{(2)}$ and $\Theta ^{(2)}$ as
  $\Gamma ^{(2)}_{\varepsilon }$, $\Sigma  ^{(2)}_{\varepsilon }$ and $\Theta ^{(2)}_{\varepsilon }$ in order to indicate their dependance from $\varepsilon $. It follows from the proofs of Lemma \ref{lem1} and \ref{lem3} that $\Gamma ^{(2)}_{\varepsilon }$ divides $\Sigma  ^{(2)}_{\varepsilon }$, that $\Theta  ^{(2)}_{\varepsilon }$ is much larger than $\Sigma  ^{(2)}_{\varepsilon }$, and that if $\varepsilon '$ is much smaller than $\varepsilon $, $\Sigma  ^{(2)}_{\varepsilon }$ divides $\Sigma  ^{(2)}_{\varepsilon' }$. Also, the proof of Lemma \ref{lem1} yields that $\Sigma  ^{(2)}_{\varepsilon }=\Gamma ^{(2)}_{\varepsilon }\Sigma  ^{(1)}_{\varepsilon^{(1)} }$, where $\varepsilon^{(1)}$ is much smaller than $\varepsilon$. So if $M\ge 1$ is an integer, and if we take $\varepsilon^{(1)}$ small enough, it follows from Remark \ref{rem00} that we can ensure that $\Sigma  ^{(1)}_{\varepsilon^{(1)} }$ is divisible with $M$.
Looking more closely at the expressions of $\Delta _{A}^{(r)}$ in the proof of Lemma \ref{prop0}, we see that 
$$\Delta  _{\varepsilon ^{(r)},A'}^{(r)}=\Sigma  _{\varepsilon ^{(2)}}^{(2)}L\Delta  _{\varepsilon ^{(r-1)},A'}^{(r-1)} 
=\Gamma  _{\varepsilon ^{(2)}}^{(2)}\Sigma  ^{(1)}_{\varepsilon^{(1)} }L\Delta  _{\varepsilon ^{(r-1)},A'}^{(r-1)}
\;\textrm{ and }\;
 \Delta  _{\varepsilon ^{(r)},A'\cup\{r-1\}}^{(r)}=\Gamma _{\varepsilon ^{(2)}}^{(2)}\Delta  _{\varepsilon ^{(r-1)},A'}^{(r-1)}$$
  for $A'\subseteq \{1,\ldots, r-2\}$, where $\varepsilon ^{(2)}$ is extremely small. Now $\varepsilon ^{(r-1)}$ is small, but only compared to $\varepsilon ^{(r)}$, and if we take $\varepsilon ^{(2)}$ small enough we can ensure that 
  $\Sigma _{\varepsilon ^{(2)}}^{(2)}$ is divisible with any of the numbers $ \Gamma ^{(2)}_{\varepsilon^{(2)} }\Delta  _{\varepsilon ^{(r-1)},A'}^{(r-1)}$, where $A'$ runs over all subsets of $\{1,\ldots, r-2\}$. It follows from this observation that given two distinct subsets $A_{1}$ and $A_{2}$ of $\{1,\ldots, r-1\}$, one of the two integers
  $\Delta  _{\varepsilon ^{(r)},A_{1}}^{(r)}$ and   $\Delta  _{\varepsilon ^{(r)},A_{2}}^{(r)}$ is always divisible with  the other. As   $\Theta   _{\varepsilon ^{(r)}}^{(r)}$ is very large compared to all the numbers 
   $\Delta  _{\varepsilon ^{(r)},A}^{(r)}$, the two arithmetic progressions $\{H  \Delta  _{\varepsilon ^{(r)},A_{1}}^{(r)}(Lj+1) \textrm{ ; }1\le j\le   \Theta   _{\varepsilon ^{(r)}}^{(r)}\}$ and $\{H  \Delta  _{\varepsilon ^{(r)},A_{2}}^{(r)}(Lj+1) \textrm{ ; }1\le j\le   \Theta   _{\varepsilon ^{(r)}}^{(r)}\}$ are necessarily intertwined. Lastly, since $Q^{(r)}$ is much larger than any integer $\Delta  _{\varepsilon ^{(r)},A}^{(r)}L
\Theta  _{\varepsilon ^{(r)}}^{(r)}$, these arithmetic progressions are also intertwined with the set $B  _{\varepsilon ^{(r)},0}^{(r)}$.
 \end{remark}

\section{Final remarks}
\subsection{Back to Question \ref{q2}}
Let $(N_{r})_{r\ge 1}$ be an increasing sequence of integers, and $(\varepsilon_{r})_{r\ge 1}$ a sequence of positive real numbers going to $0$ as $r$ goes to infinity. Consider the set $\{n_{k}\}$ defined by $\{n_{k}\}=\bigcup_{r\ge 1}\{n_{k}^{(r)}\textrm{ ; } k\in I_{N_{r}}\}$ where $\{n_{k}^{(r)}\textrm{ ; } k\in I_{N_{r}}\}$ is the part of the set $\{n_{k}^{(r)}\}$ constructed at step $N_{r}$, with suitable integers $L_{N_{r}}$ and $H_{N_{r}}$:
\begin{eqnarray*}
 \{n_{k}^{(r)}\textrm{ ; } k\in I_{N_{r}}\}&=&\{H_{N_{r}}q+ 1 \textrm{ ; }1\le q\le Q_{N_{r}}^{(r)}\}\cup\{H_{N_{r}}\Delta  _{N_{r},\varnothing}^{(r)}\}\\
 &\cup&\bigcup_{A\subseteq\{1,\ldots, r-1\}, A\not =\varnothing}\{H_{N_{r}}\Delta   _{N_{r},A}^{(r)}
  (L_{N_{r}}j+ 1)\textrm{ ; }
 1\le j\le \Theta _{N_{r}}^{(r)}\}.
\end{eqnarray*}
All the sets $\{n_{k}^{(r)}\textrm{ ; } k\in I_{N_{r}}\}$ are disjoint, and very far away one from another. For all $r\ge 1$ and $(\lambda _{1}, \lambda _{2},\ldots,\lambda _{r})\in \T^{r}$, we can consider for $s\ge r$ the $s$-tuple
$(\lambda _{1},\ldots, \lambda _{r},1,\ldots, 1)\in\T^{s}$. We know from Theorem \ref{th10} that there exists a $k\in I_{N_{s}}$ such that 
$$\max_{i=1,\ldots, r}|\lambda_{i} ^{n_{k}^{(s)}}-1|<\varepsilon_{s},$$ and so we see that 
the set $\{n_{k}\}$ is a Bohr set. Moreover it is not difficult to see from the construction that if $(\Pi_{N_{r}}^{(r)})_{r\ge 1}$ is any sequence of integers, the sets
\begin{eqnarray*}
 \{n_{k}^{(r)}\textrm{ ; } k\in I_{N_{r}}\}&=&\{H_{N_{r}}q+ 1 \textrm{ ; }1\le q\le Q_{N_{r}}^{(r)}\}\cup\{H_{N_{r}}\Delta  _{N_{r},\varnothing}^{(r)}\}\\
 &\cup&\bigcup_{A\subseteq\{1,\ldots, r-1\}, A\not =\varnothing}\{H_{N_{r}}\Delta   _{N_{r},A}^{(r)}
  (L_{N_{r}}j+ 1)\textrm{ ; }
 \Pi_{N_{r}}^{(r)}\le j\le \Theta _{N_{r}}^{(r)}\}.
\end{eqnarray*}
are also $r$-Bohr provided $\Theta _{N_{r}}^{(r)}$ is sufficiently large for each $r\ge 1$. Hence $\{n_{k}^{(r)}\textrm{ ; } k\in I_{N_{r}}\}$ is a Bohr set as well in this case.
\par\smallskip
All these sets $\{n_{k}\}$ are ``small'' (in particular they have density zero), and, more importantly, they have a very explicit arithmetical structure. We do not know whether $\{n_{k}\}$ can be non-recurrent for some suitable choice of the parameters in the construction, but we do know that, for some particular choices, $\{n_{k}\}$
is a recurrence set, and even a Poincar\'{e} set. This leaves the following question open to further investigation:

\begin{question}
Is it possible to choose the parameters in the construction of the set $\{n_{k}\}$ above in such a way that $\{n_{k}\}$ is not a recurrence set? 
\end{question}

\subsection{Other classes of non-recurrent systems for $\{n_{k}^{(r)}\}$}
We have seen that each one of the sets $\{n_{k}^{(r)}\}$ constructed in the proof of Theorem \ref{th2} is not recurrent for some product of $2^{r-1}+1$ rotations. These are very specific dynamical systems, and one can wonder whether there are other ``natural'' dynamical systems which would be non-recurrent \wrt\ $\{n_{k}^{(r)}\}$. In particular in a recent work \cite{BDLR} Bergelson, Del Junco, Lema\'nczyk and Rosenblatt initiated the study of sets which are non-recurrent in the measure-theoretic sense for weakly mixing dynamical systems. Thus the question naturally arises: 
is it possible to find an $r$-Bohr set which would be non-recurrent (in the measure-theoretic sense) for some  weakly mixing dynamical system? Such $r$-Bohr sets would necessarily have density zero, so that the examples of \cite{K} cannot have this property. It is possible to show that for each $r\ge 1$, each one of the sets 
 $\{n_{k}^{(r)}\}$ obtained in Theorem \ref{th2} is non-recurrent for some  weakly mixing dynamical system. This will be developed in \cite{Gr}. 
%

\end{document}